\newtheorem{theorem}{Theorem}
\newtheorem{axiom}[theorem]{Axiom}
\newtheorem{conjecture}[theorem]{Conjecture}
\newtheorem{corollary}[theorem]{Corollary}
\newtheorem{definition}[theorem]{Definition}
\newtheorem{example}[theorem]{Example}
\newtheorem{exercise}[theorem]{Exercise}
\newtheorem{lemma}[theorem]{Lemma}
\newtheorem{proposition}[theorem]{Proposition}
\newtheorem{remark}[theorem]{Remark}
\newenvironment{proof}[1][Proof]{\noindent\textbf{#1.} }{\ \rule{0.5em}{0.5em}}
\chardef\@x10\chardef\@xv60
\def\tcitime{
\def\@time{%
  \@minute\time\@hour\@minute\divide\@hour\@xv
  \ifnum\@hour<\@x 0\fi\the\@hour:%
  \multiply\@hour\@xv\advance\@minute-\@hour
  \ifnum\@minute<\@x 0\fi\the\@minute
  }}%
\def\QCTOpt[#1]#2{%
  \def\QCTOptB{#1}
  \def\QCTOptA{#2}
}
\def\QCTNOpt#1{%
  \def\QCTOptA{#1}
  \let\QCTOptB\empty
}
\def\Qct{%
  \@ifnextchar[{%
    \QCTOpt}{\QCTNOpt}
}
\def\QCBOpt[#1]#2{%
  \def\QCBOptB{#1}
  \def\QCBOptA{#2}
}
\def\QCBNOpt#1{%
  \def\QCBOptA{#1}
  \let\QCBOptB\empty
}
\def\Qcb{%
  \@ifnextchar[{%
    \QCBOpt}{\QCBNOpt}
}
\def\PrepCapArgs{%
  \ifx\QCBOptA\empty
    \ifx\QCTOptA\empty
      {}%
    \else
      \ifx\QCTOptB\empty
        {\QCTOptA}%
      \else
        [\QCTOptB]{\QCTOptA}%
      \fi
    \fi
  \else
    \ifx\QCBOptA\empty
      {}%
    \else
      \ifx\QCBOptB\empty
        {\QCBOptA}%
      \else
        [\QCBOptB]{\QCBOptA}%
      \fi
    \fi
  \fi
}
\def\GRAPHICSPS#1{%
 \ifcase\GRAPHICSTYPE
   \special{ps: #1}%
 \or
   \special{language "PS", include "#1"}%
 \fi
}%
\def\graffile#1#2#3#4{%
    \bgroup
    \leavevmode
    \@ifundefined{bbl@deactivate}{\def~{\string~}}{\activesoff}
    \raise -#4 \BOXTHEFRAME{%
        \hbox to #2{\raise #3\hbox to #2{\null #1\hfil}}}%
    \egroup
}%
\def\draftbox#1#2#3#4{%
 \leavevmode\raise -#4 \hbox{%
  \frame{\rlap{\protect\tiny #1}\hbox to #2%
   {\vrule height#3 width\z@ depth\z@\hfil}%
  }%
 }%
}%
\newif\ifwasdraft
\def\GRAPHIC#1#2#3#4#5{%
 \ifnum\draft=\@ne\draftbox{#2}{#3}{#4}{#5}%
  \else\graffile{#1}{#3}{#4}{#5}%
  \fi
 }%
\def\addtoLaTeXparams#1{%
    \edef\LaTeXparams{\LaTeXparams #1}}%
\newif\ifBoxFrame \BoxFramefalse
\newif\ifOverFrame \OverFramefalse
\newif\ifUnderFrame \UnderFramefalse
\def\BOXTHEFRAME#1{%
   \hbox{%
      \ifBoxFrame
         \frame{#1}%
      \else
         {#1}%
      \fi
   }%
}
\def\doFRAMEparams#1{\BoxFramefalse\OverFramefalse\UnderFramefalse\readFRAMEparams#1\end}%
\def\readFRAMEparams#1{%
 \ifx#1\end%
  \let\next=\relax
  \else
  \ifx#1i\dispkind=\z@\fi
  \ifx#1d\dispkind=\@ne\fi
  \ifx#1f\dispkind=\tw@\fi
  \ifx#1t\addtoLaTeXparams{t}\fi
  \ifx#1b\addtoLaTeXparams{b}\fi
  \ifx#1p\addtoLaTeXparams{p}\fi
  \ifx#1h\addtoLaTeXparams{h}\fi
  \ifx#1X\BoxFrametrue\fi
  \ifx#1O\OverFrametrue\fi
  \ifx#1U\UnderFrametrue\fi
  \ifx#1w
    \ifnum\draft=1\wasdrafttrue\else\wasdraftfalse\fi
    \draft=\@ne
  \fi
  \let\next=\readFRAMEparams
  \fi
 \next
 }%
\def\IFRAME#1#2#3#4#5#6{%
      \bgroup
      \let\QCTOptA\empty
      \let\QCTOptB\empty
      \let\QCBOptA\empty
      \let\QCBOptB\empty
      #6%
      \parindent=0pt%
      \leftskip=0pt
      \rightskip=0pt
      \setbox0 = \hbox{\QCBOptA}%
      \@tempdima = #1\relax
      \ifOverFrame
          \typeout{This is not implemented yet}%
          \show\HELP
      \else
         \ifdim\wd0>\@tempdima
            \advance\@tempdima by \@tempdima
            \ifdim\wd0 >\@tempdima
               \textwidth=\@tempdima
               \setbox1 =\vbox{%
                  \noindent\hbox to \@tempdima{\hfill\GRAPHIC{#5}{#4}{#1}{#2}{#3}\hfill}\\%
                  \noindent\hbox to \@tempdima{\parbox[b]{\@tempdima}{\QCBOptA}}%
               }%
               \wd1=\@tempdima
            \else
               \textwidth=\wd0
               \setbox1 =\vbox{%
                 \noindent\hbox to \wd0{\hfill\GRAPHIC{#5}{#4}{#1}{#2}{#3}\hfill}\\%
                 \noindent\hbox{\QCBOptA}%
               }%
               \wd1=\wd0
            \fi
         \else
            \ifdim\wd0>0pt
              \hsize=\@tempdima
              \setbox1 =\vbox{%
                \unskip\GRAPHIC{#5}{#4}{#1}{#2}{0pt}%
                \break
                \unskip\hbox to \@tempdima{\hfill \QCBOptA\hfill}%
              }%
              \wd1=\@tempdima
           \else
              \hsize=\@tempdima
              \setbox1 =\vbox{%
                \unskip\GRAPHIC{#5}{#4}{#1}{#2}{0pt}%
              }%
              \wd1=\@tempdima
           \fi
         \fi
         \@tempdimb=\ht1
         \advance\@tempdimb by \dp1
         \advance\@tempdimb by -#2%
         \advance\@tempdimb by #3%
         \leavevmode
         \raise -\@tempdimb \hbox{\box1}%
      \fi
      \egroup%
}%
\def\DFRAME#1#2#3#4#5{%
 \begin{center}
     \let\QCTOptA\empty
     \let\QCTOptB\empty
     \let\QCBOptA\empty
     \let\QCBOptB\empty
     \ifOverFrame 
        #5\QCTOptA\par
     \fi
     \GRAPHIC{#4}{#3}{#1}{#2}{\z@}
     \ifUnderFrame 
        \nobreak\par\nobreak#5\QCBOptA
     \fi
 \end{center}%
 }%
\def\FFRAME#1#2#3#4#5#6#7{%
 \begin{figure}[#1]%
  \let\QCTOptA\empty
  \let\QCTOptB\empty
  \let\QCBOptA\empty
  \let\QCBOptB\empty
  \ifOverFrame
    #4
    \ifx\QCTOptA\empty
    \else
      \ifx\QCTOptB\empty
        \caption{\QCTOptA}%
      \else
        \caption[\QCTOptB]{\QCTOptA}%
      \fi
    \fi
    \ifUnderFrame\else
      \label{#5}%
    \fi
  \else
    \UnderFrametrue%
  \fi
  \begin{center}\GRAPHIC{#7}{#6}{#2}{#3}{\z@}\end{center}%
  \ifUnderFrame
    #4
    \ifx\QCBOptA\empty
      \caption{}%
    \else
      \ifx\QCBOptB\empty
        \caption{\QCBOptA}%
      \else
        \caption[\QCBOptB]{\QCBOptA}%
      \fi
    \fi
    \label{#5}%
  \fi
  \end{figure}%
 }%
\def\makeactives{
  \catcode`\"=\active
  \catcode`\;=\active
  \catcode`\:=\active
  \catcode`\'=\active
  \catcode`\~=\active
}
   \gdef\activesoff{%
      \def"{\string"}
      \def;{\string;}
      \def:{\string:}
      \def'{\string'}
      \def~{\string~}
    }
\def\FRAME#1#2#3#4#5#6#7#8{%
 \bgroup
 \ifnum\draft=\@ne
   \wasdrafttrue
 \else
   \wasdraftfalse%
 \fi
 \def\LaTeXparams{}%
 \dispkind=\z@
 \def\LaTeXparams{}%
 \doFRAMEparams{#1}%
 \ifnum\dispkind=\z@\IFRAME{#2}{#3}{#4}{#7}{#8}{#5}\else
  \ifnum\dispkind=\@ne\DFRAME{#2}{#3}{#7}{#8}{#5}\else
   \ifnum\dispkind=\tw@
    \edef\@tempa{\noexpand\FFRAME{\LaTeXparams}}%
    \@tempa{#2}{#3}{#5}{#6}{#7}{#8}%
    \fi
   \fi
  \fi
  \ifwasdraft\draft=1\else\draft=0\fi{}%
  \egroup
 }%
\def\TEXUX#1{"texux"}
\long\def\QQQ#1#2{%
     \long\expandafter\def\csname#1\endcsname{#2}}%
\long\def\QQA#1#2{}%
\def\QTR#1#2{{\csname#1\endcsname #2}}
\def\EXPAND#1[#2]#3{}%
\def\NOEXPAND#1[#2]#3{}%
\def\LaTeXparent#1{}%
\def\ChildStyles#1{}%
\def\ChildDefaults#1{}%
\def\QTagDef#1#2#3{}%
  \providecommand{\UNICODE}[2][]{}
\def\QQfnmark#1{\footnotemark}
 \def\abstract{%
  \if@twocolumn
   \section*{Abstract (Not appropriate in this style!)}%
   \else \small 
   \begin{center}{\bf Abstract\vspace{-.5em}\vspace{\z@}}\end{center}%
   \quotation 
   \fi
  }%
   \def\registered{\relax\ifmmode{}\r@gistered
                    \else$\m@th\r@gistered$\fi}%
 \def\r@gistered{^{\ooalign
  {\hfil\raise.07ex\hbox{$\scriptstyle\rm\text{R}$}\hfil\crcr
  \mathhexbox20D}}}}{}%
\newdimen\theight
\def\Column{%
 \vadjust{\setbox\z@=\hbox{\scriptsize\quad\quad tcol}%
  \theight=\ht\z@\advance\theight by \dp\z@\advance\theight by \lineskip
  \kern -\theight \vbox to \theight{%
   \rightline{\rlap{\box\z@}}%
   \vss
   }%
  }%
 }%
\def\qed{%
 \ifhmode\unskip\nobreak\fi\ifmmode\ifinner\else\hskip5\p@\fi\fi
 \hbox{\hskip5\p@\vrule width4\p@ height6\p@ depth1.5\p@\hskip\p@}%
 }%
\def\miss{\hbox{\vrule height2\p@ width 2\p@ depth\z@}}%
\def\tcol#1{{\baselineskip=6\p@ \vcenter{#1}} \Column}  %
\def\newfmtname{LaTeX2e}
  \DeclareOldFontCommand{\rm}{\normalfont\rmfamily}{\mathrm}
  \DeclareOldFontCommand{\sf}{\normalfont\sffamily}{\mathsf}
  \DeclareOldFontCommand{\tt}{\normalfont\ttfamily}{\mathtt}
  \DeclareOldFontCommand{\bf}{\normalfont\bfseries}{\mathbf}
  \DeclareOldFontCommand{\it}{\normalfont\itshape}{\mathit}
  \DeclareOldFontCommand{\sl}{\normalfont\slshape}{\@nomath\sl}
  \DeclareOldFontCommand{\sc}{\normalfont\scshape}{\@nomath\sc}
\def\alpha{{\Greekmath 010B}}%
\def\beta{{\Greekmath 010C}}%
\def\gamma{{\Greekmath 010D}}%
\def\delta{{\Greekmath 010E}}%
\def\epsilon{{\Greekmath 010F}}%
\def\zeta{{\Greekmath 0110}}%
\def\eta{{\Greekmath 0111}}%
\def\theta{{\Greekmath 0112}}%
\def\iota{{\Greekmath 0113}}%
\def\kappa{{\Greekmath 0114}}%
\def\lambda{{\Greekmath 0115}}%
\def\mu{{\Greekmath 0116}}%
\def\nu{{\Greekmath 0117}}%
\def\xi{{\Greekmath 0118}}%
\def\pi{{\Greekmath 0119}}%
\def\rho{{\Greekmath 011A}}%
\def\sigma{{\Greekmath 011B}}%
\def\tau{{\Greekmath 011C}}%
\def\upsilon{{\Greekmath 011D}}%
\def\phi{{\Greekmath 011E}}%
\def\chi{{\Greekmath 011F}}%
\def\psi{{\Greekmath 0120}}%
\def\omega{{\Greekmath 0121}}%
\def\varepsilon{{\Greekmath 0122}}%
\def\vartheta{{\Greekmath 0123}}%
\def\varpi{{\Greekmath 0124}}%
\def\varrho{{\Greekmath 0125}}%
\def\varsigma{{\Greekmath 0126}}%
\def\varphi{{\Greekmath 0127}}%
\def\nabla{{\Greekmath 0272}}
\def\FindBoldGroup{%
   {\setbox0=\hbox{$\mathbf{x\global\edef\theboldgroup{\the\mathgroup}}$}}%
}
\def\Greekmath#1#2#3#4{%
    \if@compatibility
        \ifnum\mathgroup=\symbold
           \mathchoice{\mbox{\boldmath$\displaystyle\mathchar"#1#2#3#4$}}%
                      {\mbox{\boldmath$\textstyle\mathchar"#1#2#3#4$}}%
                      {\mbox{\boldmath$\scriptstyle\mathchar"#1#2#3#4$}}%
                      {\mbox{\boldmath$\scriptscriptstyle\mathchar"#1#2#3#4$}}%
        \else
           \mathchar"#1#2#3#4%
        \fi 
    \else 
        \FindBoldGroup
        \ifnum\mathgroup=\theboldgroup 
           \mathchoice{\mbox{\boldmath$\displaystyle\mathchar"#1#2#3#4$}}%
                      {\mbox{\boldmath$\textstyle\mathchar"#1#2#3#4$}}%
                      {\mbox{\boldmath$\scriptstyle\mathchar"#1#2#3#4$}}%
                      {\mbox{\boldmath$\scriptscriptstyle\mathchar"#1#2#3#4$}}%
        \else
           \mathchar"#1#2#3#4%
        \fi     	    
	  \fi}
\newif\ifGreekBold  \GreekBoldfalse
\let\SAVEPBF=\pbf
\def\pbf{\GreekBoldtrue\SAVEPBF}%
  \newcounter{equationnumber}  
  \def\mathletters{%
     \addtocounter{equation}{1}
     \edef\@currentlabel{\theequation}%
     \setcounter{equationnumber}{\c@equation}
     \setcounter{equation}{0}%
     \edef\theequation{\@currentlabel\noexpand\alph{equation}}%
  }
    \def\BibTeX{{\rm B\kern-.05em{\sc i\kern-.025em b}\kern-.08em
                 T\kern-.1667em\lower.7ex\hbox{E}\kern-.125emX}}}{}%
\def\AmS{{\protect\usefont{OMS}{cmsy}{m}{n}%
                A\kern-.1667em\lower.5ex\hbox{M}\kern-.125emS}}}{}%
\def\@@eqncr{\let\@tempa\relax
    \ifcase\@eqcnt \def\@tempa{& & &}\or \def\@tempa{& &}%
      \else \def\@tempa{&}\fi
     \@tempa
     \if@eqnsw
        \iftag@
           \@taggnum
        \else
           \@eqnnum\stepcounter{equation}%
        \fi
     \fi
     \global\tag@false
     \global\@eqnswtrue
     \global\@eqcnt\z@\cr}
\def\TCItag{\@ifnextchar*{\@TCItagstar}{\@TCItag}}
\def\@TCItag#1{%
    \global\tag@true
    \global\def\@taggnum{(#1)}}
\def\@TCItagstar*#1{%
    \global\tag@true
    \global\def\@taggnum{#1}}
\def\dprod{\mathop{\displaystyle \prod }}%
\def\dbigcap{\mathop{\displaystyle \bigcap }}%
\let\DOTSI\relax
\def\RIfM@{\relax\ifmmode}%
\def\FN@{\futurelet\next}%
\def\iint{\DOTSI\intno@\tw@\FN@\ints@}%
\def\iiint{\DOTSI\intno@\thr@@\FN@\ints@}%
\def\iiiint{\DOTSI\intno@4 \FN@\ints@}%
\def\idotsint{\DOTSI\intno@\z@\FN@\ints@}%
\def\ints@{\findlimits@\ints@@}%
\newif\iflimtoken@
\newif\iflimits@
\def\findlimits@{\limtoken@true\ifx\next\limits\limits@true
 \else\ifx\next\nolimits\limits@false\else
 \limtoken@false\ifx\ilimits@\nolimits\limits@false\else
 \ifinner\limits@false\else\limits@true\fi\fi\fi\fi}%
\def\multint@{\int\ifnum\intno@=\z@\intdots@                          
 \else\intkern@\fi                                                    
 \ifnum\intno@>\tw@\int\intkern@\fi                                   
 \ifnum\intno@>\thr@@\int\intkern@\fi                                 
 \int}
\def\multintlimits@{\intop\ifnum\intno@=\z@\intdots@\else\intkern@\fi
 \ifnum\intno@>\tw@\intop\intkern@\fi
 \ifnum\intno@>\thr@@\intop\intkern@\fi\intop}%
\def\intic@{%
    \mathchoice{\hskip.5em}{\hskip.4em}{\hskip.4em}{\hskip.4em}}%
\def\negintic@{\mathchoice
 {\hskip-.5em}{\hskip-.4em}{\hskip-.4em}{\hskip-.4em}}%
\def\ints@@{\iflimtoken@                                              
 \def\ints@@@{\iflimits@\negintic@
   \mathop{\intic@\multintlimits@}\limits                             
  \else\multint@\nolimits\fi                                          
  \eat@}
 \else                                                                
 \def\ints@@@{\iflimits@\negintic@
  \mathop{\intic@\multintlimits@}\limits\else
  \multint@\nolimits\fi}\fi\ints@@@}%
\def\intkern@{\mathchoice{\!\!\!}{\!\!}{\!\!}{\!\!}}%
\def\plaincdots@{\mathinner{\cdotp\cdotp\cdotp}}%
\def\intdots@{\mathchoice{\plaincdots@}%
 {{\cdotp}\mkern1.5mu{\cdotp}\mkern1.5mu{\cdotp}}%
 {{\cdotp}\mkern1mu{\cdotp}\mkern1mu{\cdotp}}%
 {{\cdotp}\mkern1mu{\cdotp}\mkern1mu{\cdotp}}}%
\def\RIfM@{\relax\protect\ifmmode}
\def\text{\RIfM@\expandafter\text@\else\expandafter\mbox\fi}
\let\nfss@text\text
\def\text@#1{\mathchoice
   {\textdef@\displaystyle\f@size{#1}}%
   {\textdef@\textstyle\tf@size{\firstchoice@false #1}}%
   {\textdef@\textstyle\sf@size{\firstchoice@false #1}}%
   {\textdef@\textstyle \ssf@size{\firstchoice@false #1}}%
   \glb@settings}
\def\textdef@#1#2#3{\hbox{{%
                    \everymath{#1}%
                    \let\f@size#2\selectfont
                    #3}}}
\newif\iffirstchoice@
\def\Let@{\relax\iffalse{\fi\let\\=\cr\iffalse}\fi}%
\def\vspace@{\def\vspace##1{\crcr\noalign{\vskip##1\relax}}}%
\def\multilimits@{\bgroup\vspace@\Let@
 \baselineskip\fontdimen10 \scriptfont\tw@
 \advance\baselineskip\fontdimen12 \scriptfont\tw@
 \lineskip\thr@@\fontdimen8 \scriptfont\thr@@
 \lineskiplimit\lineskip
 \vbox\bgroup\ialign\bgroup\hfil$\m@th\scriptstyle{##}$\hfil\crcr}%
\def\Sb{_\multilimits@}%
\def\endSb{\crcr\egroup\egroup\egroup}%
\def\Sp{^\multilimits@}%
\newdimen\ex@
\def\rightarrowfill@#1{$#1\m@th\mathord-\mkern-6mu\cleaders
 \hbox{$#1\mkern-2mu\mathord-\mkern-2mu$}\hfill
 \mkern-6mu\mathord\rightarrow$}%
\def\leftarrowfill@#1{$#1\m@th\mathord\leftarrow\mkern-6mu\cleaders
 \hbox{$#1\mkern-2mu\mathord-\mkern-2mu$}\hfill\mkern-6mu\mathord-$}%
\def\leftrightarrowfill@#1{$#1\m@th\mathord\leftarrow
\mkern-6mu\cleaders
 \hbox{$#1\mkern-2mu\mathord-\mkern-2mu$}\hfill
 \mkern-6mu\mathord\rightarrow$}%
\def\overrightarrow{\mathpalette\overrightarrow@}%
\def\overrightarrow@#1#2{\vbox{\ialign{##\crcr\rightarrowfill@#1\crcr
 \noalign{\kern-\ex@\nointerlineskip}$\m@th\hfil#1#2\hfil$\crcr}}}%
\def\overleftarrow{\mathpalette\overleftarrow@}%
\def\overleftarrow@#1#2{\vbox{\ialign{##\crcr\leftarrowfill@#1\crcr
 \noalign{\kern-\ex@\nointerlineskip}$\m@th\hfil#1#2\hfil$\crcr}}}%
\def\overleftrightarrow{\mathpalette\overleftrightarrow@}%
\def\overleftrightarrow@#1#2{\vbox{\ialign{##\crcr
   \leftrightarrowfill@#1\crcr
 \noalign{\kern-\ex@\nointerlineskip}$\m@th\hfil#1#2\hfil$\crcr}}}%
\def\underrightarrow{\mathpalette\underrightarrow@}%
\def\underrightarrow@#1#2{\vtop{\ialign{##\crcr$\m@th\hfil#1#2\hfil
  $\crcr\noalign{\nointerlineskip}\rightarrowfill@#1\crcr}}}%
\def\underleftarrow{\mathpalette\underleftarrow@}%
\def\underleftarrow@#1#2{\vtop{\ialign{##\crcr$\m@th\hfil#1#2\hfil
  $\crcr\noalign{\nointerlineskip}\leftarrowfill@#1\crcr}}}%
\def\underleftrightarrow{\mathpalette\underleftrightarrow@}%
\def\underleftrightarrow@#1#2{\vtop{\ialign{##\crcr$\m@th
  \hfil#1#2\hfil$\crcr
 \noalign{\nointerlineskip}\leftrightarrowfill@#1\crcr}}}%
\def\qopnamewl@#1{\mathop{\operator@font#1}\nlimits@}
\let\nlimits@\displaylimits
\def\setboxz@h{\setbox\z@\hbox}
\def\varlim@#1#2{\mathop{\vtop{\ialign{##\crcr
 \hfil$#1\m@th\operator@font lim$\hfil\crcr
 \noalign{\nointerlineskip}#2#1\crcr
 \noalign{\nointerlineskip\kern-\ex@}\crcr}}}}
 \def\rightarrowfill@#1{\m@th\setboxz@h{$#1-$}\ht\z@\z@
  $#1\copy\z@\mkern-6mu\cleaders
  \hbox{$#1\mkern-2mu\box\z@\mkern-2mu$}\hfill
  \mkern-6mu\mathord\rightarrow$}
\def\leftarrowfill@#1{\m@th\setboxz@h{$#1-$}\ht\z@\z@
  $#1\mathord\leftarrow\mkern-6mu\cleaders
  \hbox{$#1\mkern-2mu\copy\z@\mkern-2mu$}\hfill
  \mkern-6mu\box\z@$}
\def\projlim{\qopnamewl@{proj\,lim}}
\def\injlim{\qopnamewl@{inj\,lim}}
\def\varinjlim{\mathpalette\varlim@\rightarrowfill@}
\def\varprojlim{\mathpalette\varlim@\leftarrowfill@}
\def\varliminf{\mathpalette\varliminf@{}}
\def\varliminf@#1{\mathop{\underline{\vrule\@depth.2\ex@\@width\z@
   \hbox{$#1\m@th\operator@font lim$}}}}
\def\varlimsup{\mathpalette\varlimsup@{}}
\def\varlimsup@#1{\mathop{\overline
  {\hbox{$#1\m@th\operator@font lim$}}}}
\def\align{\@verbatim \frenchspacing\@vobeyspaces \@alignverbatim
You are using the "align" environment in a style in which it is not defined.}
\let\csname endalign*\endcsname =\endtrivlist
\def\alignat{\@verbatim \frenchspacing\@vobeyspaces \@alignatverbatim
You are using the "alignat" environment in a style in which it is not defined.}
\let\csname endalignat*\endcsname =\endtrivlist
\def\xalignat{\@verbatim \frenchspacing\@vobeyspaces \@xalignatverbatim
You are using the "xalignat" environment in a style in which it is not defined.}
\let\csname endxalignat*\endcsname =\endtrivlist
\def\gather{\@verbatim \frenchspacing\@vobeyspaces \@gatherverbatim
You are using the "gather" environment in a style in which it is not defined.}
\let\csname endgather*\endcsname =\endtrivlist
\def\multiline{\@verbatim \frenchspacing\@vobeyspaces \@multilineverbatim
You are using the "multiline" environment in a style in which it is not defined.}
\let\csname endmultiline*\endcsname =\endtrivlist
\def\arrax{\@verbatim \frenchspacing\@vobeyspaces \@arraxverbatim
You are using a type of "array" construct that is only allowed in AmS-LaTeX.}
\def\tabulax{\@verbatim \frenchspacing\@vobeyspaces \@tabulaxverbatim
You are using a type of "tabular" construct that is only allowed in AmS-LaTeX.}
\let\csname endarrax*\endcsname =\endtrivlist
\let\csname endtabulax*\endcsname =\endtrivlist
 \def\endequation{%
     \ifmmode\ifinner 
      \iftag@
        \addtocounter{equation}{-1} 
        $\hfil
           \displaywidth\linewidth\@taggnum\egroup \endtrivlist
        \global\tag@false
        \global\@ignoretrue   
      \else
        $\hfil
           \displaywidth\linewidth\@eqnnum\egroup \endtrivlist
        \global\tag@false
        \global\@ignoretrue 
      \fi
     \else   
      \iftag@
        \addtocounter{equation}{-1} 
        \eqno \hbox{\@taggnum}
        \global\tag@false%
        $$\global\@ignoretrue
      \else
        \eqno \hbox{\@eqnnum}
        $$\global\@ignoretrue
      \fi
     \fi\fi
 } 
 \newif\iftag@ \tag@false
 \def\TCItag{\@ifnextchar*{\@TCItagstar}{\@TCItag}}
 \def\@TCItag#1{%
     \global\tag@true
     \global\def\@taggnum{(#1)}}
 \def\@TCItagstar*#1{%
     \global\tag@true
     \global\def\@taggnum{#1}}
     \def\tag{\@ifnextchar*{\@tagstar}{\@tag}}
     \def\@tag#1{%
         \global\tag@true
         \global\def\@taggnum{(#1)}}
     \def\@tagstar*#1{%
         \global\tag@true
         \global\def\@taggnum{#1}}
\begin{document}

\begin{center}
\bigskip {\LARGE Regularity Properties of the Stochastic Flow of a Skew
Fractional Brownian Motion}

{ Oussama Amine\footnote{{ email: oussamaa@math.uio.no}}}$^{%
\text{,4}}${ , David R. Ba\~{n}os\footnote{{ email:
davidru@math.uio.no}}}$^{\text{,4}}${ \ and Frank Proske\footnote{%
{ email: proske@math.uio.no}}}$^{\text{,}}${ \footnote{{ %
Department of Mathematics, University of Oslo, Moltke Moes vei 35, P.O. Box
1053 Blindern, 0316 Oslo, Norway.}}}

\bigskip

{\large Abstract}
\end{center}

In this paper we prove, for small Hurst parameters, the higher order
differentiability of a stochastic flow associated with a stochastic
differential equation driven by an additive multi-dimensional fractional Brownian
noise, where the bounded variation part is given by the local time of the
unknown solution process. The proof of this result relies on Fourier analysis based variational
calculus techniques and on intrinsic properties of
the fractional Brownian motion.

\emph{keywords}: SDEs, Compactness criterion, generalized drift, Malliavin
calculus, reflected SDE's, stochastic flows.

\emph{Mathematics Subject Classification} (2010): 60H10, 49N60.

\section{Introduction}

\bigskip Consider a $d-$dimensional fractional Brownian motion (fBm) 

\begin{equation*}
B_{t}^{H}=(B_{t}^{1,H},\ldots ,B_{t}^{d,H}),\text{ }0\leq t\leq T
\end{equation*}%

with Hurst parameter $H\in (0,1)$ constructed on some complete probability
space $(\Omega ,\mathcal{F},\mu ).$ Here $B_{\cdot }^{1,H},\ldots ,B_{\cdot
}^{d,H}$ are independent $1-$dimensional fractional Brownian motions, that
is centered Gaussian processes with a covariance structure given by%

\begin{equation*}
R_{H}(t,s)=E[B_{t}^{i,H}B_{s}^{i,H}]=\frac{1}{2}(s^{2H}+t^{2H}+\left\vert
t-s\right\vert ^{2H}).
\end{equation*}%

We mention that for $H=\frac{1}{2}$ the fBm is a standard Wiener process. If 
$H\neq \frac{1}{2}$, the fBm is neither a semimartingale nor a Markov
process. See e.g. \cite{Nualart} for more information about fBm.

In this paper we want to study the regularity of solutions $X_{t}^{x},$ $%
0\leq t\leq T$ to the stochastic differential equation (SDE)%

\begin{equation}
X_{t}^{x}=x+\alpha L_{t}(X^{x})\cdot \mathbf{1}_{d}+B_{t}^{H},\text{ }0\leq
t\leq T  \label{SkewfBm}
\end{equation}%

with respect to their initial condition $x\in \mathbb{R}^{d}$. Here the
Hurst parameter $H$ of the fBm is small, that is $H\in (0,1/2)$ , $\alpha
\in \mathbb{R},$ $\mathbf{1}_{d}$ is the vector with entries $1$ and $%
L_{t}(X^{x}),$ $0\leq t\leq T$ is the local time of the unknown solution
process, which one can define as

\begin{equation*}
L_{t}(X^{x})=\lim_{\varepsilon \searrow 0}\int_{0}^{t}\varphi _{\varepsilon
}(X_{s}^{x})ds
\end{equation*}%

where the limit is in probability and $\varphi _{\varepsilon }$ approximates, in distribution, the
Dirac delta function $\delta _{0}$ in zero. Here a commonly
used approximation $\varphi _{\varepsilon }$ is given by%

\begin{equation}
\varphi _{\varepsilon }(x)=\varepsilon ^{-\frac{d}{2}}\varphi (\varepsilon
^{-\frac{1}{2}}x),\text{ }\varepsilon >0,  \label{Dirac}
\end{equation}%

where $\varphi $ is a $d-$dimensional Gaussian probability density.

In the Wiener case, that is $H=\frac{1}{2}$, and $d=1$ solutions to
equations of this type are referred to as Skew Brownian motion in the
literature and were first studied by \cite{IM} and \cite{W} in the weak and
strong sense. See also the related articles \cite{HS}, \cite{Yor}, \cite%
{Revuz}, \cite{BassChen}, \cite{FRW} and \cite{Lejay}. In the sequel, we may
therefore also call solutions to (\ref{SkewfBm}) Skew Fractional Brownian
motions.

In the case of $H\in (0,1/2)$ the authors in \cite{BLPP} recently
constructed strong solutions to (\ref{SkewfBm}) by using techniques from
Malliavin calculus. In fact, the authors prove the following result:

\begin{theorem}
\label{Existence}Let $H<\frac{1}{2(d+2)}.$ Then for all $x\in \mathbb{R}^{d}$
and $\alpha \in \mathbb{R}$, there exists a strong solution to $X_{t}^{x},$ $%
0\leq t\leq T$ to (\ref{SkewfBm}). Moreover, $X_{t}^{x}$ is Malliavin
differentiable for all $0\leq t\leq T$.
\end{theorem}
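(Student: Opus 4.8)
The plan is to obtain $X^{x}$ as an $L^{2}(\Omega)$-limit of solutions to regularized equations, with Malliavin differentiability coming out of uniform Malliavin--Sobolev bounds on the approximations. First I would replace the singular drift $\alpha\delta_{0}\mathbf{1}_{d}$ by the smooth bounded vector fields $b_{n}(y)=\alpha\varphi_{1/n}(y)\mathbf{1}_{d}$ and let $X^{n,x}$ be the classical (pathwise unique) strong solution of $X_{t}^{n,x}=x+\int_{0}^{t}b_{n}(X_{s}^{n,x})\,ds+B_{t}^{H}$, which exists because the noise is additive and $b_{n}$ is Lipschitz, and is adapted to the completed fBm filtration $\mathcal{F}^{B^{H}}$. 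The whole argument then reduces to proving, uniformly in $n$: (a) relative compactness of $\{X_{t}^{n,x}\}_{n}$ in $L^{2}(\Omega)$; (b) $\sup_{n}\sup_{t\le T}\|X_{t}^{n,x}\|_{\mathbb{D}^{1,2}}<\infty$; and (c) enough convergence of $L_{t}(X^{n,x})=\int_{0}^{t}\varphi_{1/n}(X_{s}^{n,x})\,ds$ to pass to the limit in the regularized equation. Once (a)--(c) hold, any $L^{2}$-limit point $X^{x}$ solves (\ref{SkewfBm}), is $\mathcal{F}^{B^{H}}$-adapted (limits in $L^{2}$ preserve measurability), hence is a strong solution, and lies in $\mathbb{D}^{1,2}$ by closedness of the Malliavin derivative operator together with (b).

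\textbf{Key a priori estimates.} The engine for (a)--(c) is a family of bounds, uniform in $n$, on expectations of functionals of $X^{n,x}$, obtained through the Girsanov theorem for fractional Brownian motion (see \cite{Nualart}). Writing $B_{t}^{H}=\int_{0}^{t}K_{H}(t,s)\,dW_{s}$ with a Volterra kernel $K_{H}$ and a Wiener process $W$, the drift $b_{n}$ can be removed by an equivalent change of measure whose density $\mathcal{E}_{n}$ has moments of all orders bounded uniformly in $n$ (using that $b_{n}$ is bounded and the mapping properties of the operator $K_{H}^{-1}$). Under the new measure $X^{n,x}=x+B^{H}$, so that $E[\Phi(X^{n,x})]=E[\mathcal{E}_{n}\,\Phi(x+B^{H})]$ for any functional $\Phi$; expanding $\mathcal{E}_{n}$ (or, equivalently, iterating the integral equation) produces, for the relevant choices of $\Phi$, simplex integrals of the form
\[
\int_{\Delta_{0,t}^{m}}E\!\left[\prod_{j=1}^{m}\partial^{\beta_{j}}\varphi_{\varepsilon}\big(x+B_{s_{j}}^{H}+R_{j}\big)\right]ds_{1}\cdots ds_{m},\qquad \Delta_{0,t}^{m}=\{0<s_{1}<\cdots<s_{m}<t\},
\]
where $R_{j}$ is a lower-order remainder and $\varepsilon=1/n$. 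The crucial step is a sharp bound on the Gaussian expectation via the \emph{local nondeterminism} of fBm: the conditional variances of the increments $B_{s_{j}}^{H}-B_{s_{j-1}}^{H}$ are comparable to $(s_{j}-s_{j-1})^{2H}$, so that integrating out the Gaussian vector and trading each derivative $\partial^{\beta_{j}}$ onto the (Gaussian) density of $B^{H}$ yields a factor of order $(s_{j}-s_{j-1})^{-H(d+|\beta_{j}|)}$. Integrating such a product over $\Delta_{0,t}^{m}$ produces a Beta/Gamma factor that decays super-exponentially in $m$ precisely when $H(d+|\beta_{j}|)<1$ for the multi-indices that occur; summing over $m$ and over those multi-indices then gives a finite bound, and this is exactly where the hypothesis $H<\frac{1}{2(d+2)}$ is used. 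I expect this combinatorial--analytic estimate — beating the factorially many terms against the smallness coming from small $H$ — to be the main obstacle; the rest of the argument is comparatively soft.

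\textbf{Compactness and Malliavin bounds.} Differentiating the regularized equation gives a linear matrix equation for $D_{r}X_{t}^{n,x}$ with coefficient $\alpha\,\nabla\varphi_{1/n}(X^{n,x})\mathbf{1}_{d}$; iterating it and taking expectations reduces everything to simplex integrals of the type above (with $|\beta_{j}|\le1$), yielding $\sup_{n}\sup_{t\le T}\|X_{t}^{n,x}\|_{\mathbb{D}^{1,2}}<\infty$. The same estimates, applied to time increments, give uniform-in-$n$ bounds of the form $E\|X_{t}^{n,x}-X_{t'}^{n,x}\|^{2}\le C|t-t'|^{\gamma}$ and $\int_{0}^{T}E\|D_{\theta+\delta}X_{t}^{n,x}-D_{\theta}X_{t}^{n,x}\|^{2}\,d\theta\le C\delta^{2\kappa}$. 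By a Malliavin-calculus compactness criterion (bounds on Malliavin derivatives plus such fractional-Sobolev-type oscillation control imply relative compactness in $L^{2}(\Omega)$), this yields (a) and (b).

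\textbf{Passage to the limit.} Along a subsequence $X_{t}^{n_{k},x}\to X_{t}^{x}$ in $L^{2}(\Omega)$ for each $t$. The uniform estimates show that $\{L_{t}(X^{n_{k},x})\}_{k}$ is Cauchy in $L^{2}(\Omega)$, and the occupation-density bounds identify its limit with the local time $L_{t}(X^{x})$ of the limiting process; passing to the limit in $X_{t}^{n_{k},x}=x+\int_{0}^{t}b_{n_{k}}(X_{s}^{n_{k},x})\,ds+B_{t}^{H}$ then shows that $X^{x}$ solves (\ref{SkewfBm}). Since each $X_{t}^{n_{k},x}$ is $\mathcal{F}_{t}^{B^{H}}$-measurable, so is $X_{t}^{x}$; hence $X^{x}$ is a strong solution. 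Finally, $X_{t}^{n_{k},x}\to X_{t}^{x}$ in $L^{2}$ with $\sup_{k}\|X_{t}^{n_{k},x}\|_{\mathbb{D}^{1,2}}<\infty$, so closedness of the Malliavin derivative operator gives $X_{t}^{x}\in\mathbb{D}^{1,2}$, i.e. Malliavin differentiability for every $0\le t\le T$.
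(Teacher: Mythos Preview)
The paper does not prove Theorem \ref{Existence}; it is quoted from \cite{BLPP}, and the present paper only uses the tools behind it (Theorem \ref{mainestimate2}, Lemma \ref{expmom}, the compactness criterion of \cite{DMN}) as black boxes. Your outline is essentially the \cite{BLPP} strategy: regularize the delta by $\varphi_{1/n}$, get uniform Malliavin--Sobolev bounds on $X^{n,x}$ via Girsanov plus local nondeterminism simplex estimates, extract $L^{2}(\Omega)$-limits by the Da Prato--Malliavin--Nualart compactness criterion, and identify the limiting drift with the local time. So at the level of architecture your proposal and the cited proof agree.

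One point does need correcting. You justify the uniform-in-$n$ moment bounds for the Girsanov density $\mathcal{E}_{n}$ by ``using that $b_{n}$ is bounded''. But $b_{n}=\alpha\varphi_{1/n}\mathbf{1}_{d}$ is \emph{not} uniformly bounded in $n$ (its sup norm blows up like $n^{d/2}$); only $\|b_{n}\|_{L^{1}}$ is uniform. The uniform exponential moments of $\mathcal{E}_{n}$ are therefore not soft: they are obtained exactly by the same local nondeterminism / simplex-integral machinery you sketch for the later steps, applied now to the Novikov-type exponent $\int_{0}^{T}(K_{H}^{-1}\int_{0}^{\cdot}b_{n}(x+B^{H}_{u})du)(s)^{2}ds$. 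This is the content of Lemma \ref{expmom}, which needs $H<\frac{1}{2(d+1)}$ and uses the $L^{1}$-norm, not the $L^{\infty}$-norm, of $\varphi_{1/n}$. Likewise, your heuristic ``$H(d+|\beta_{j}|)<1$'' undercounts: the actual threshold from Theorem \ref{mainestimate2} is $H<\frac{1}{2(d-1+2|\alpha_{j}|)}$, and the compactness step (oscillation of $D_{\theta}X^{n,x}$ in $\theta$) forces one extra derivative into the simplex integrands, which is what pushes the constraint down to $H<\frac{1}{2(d+2)}$ rather than $\frac{1}{2(d+1)}$. With these two adjustments your sketch matches the proof in \cite{BLPP}.
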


\bigskip

For general $H\in (0,1)$ we shall mention the striking work in \cite{CG},
where the authors analyze \emph{path by path} solutions to (\ref{SkewfBm})
in the framework of Besov spaces $B_{\infty ,\infty }^{\alpha +1}$ by
employing techniques based e.g. on the Leray-Schauder-Tychonoff fixed point
theorem and a comparison principle with respect to an averaging operator. For $H<\frac{1}{2(d+1)}$, which is a slightly looser upper bound
for $H$ than the one in Theorem \ref{Existence}, the authors obtain existence of
strong solutions. In the case $H<\frac{1}{2(d+2)},$which corresponds to the
condition in Theorem \ref{Existence} they even prove path by path
uniqueness, but not Malliavin differentiability of such solutions. Further,
for $H<\frac{1}{2(d+3)}$ the authors are able to construct unique Lipschitz
flows. However, a disadvantage of the latter approach is that it, in contrast
to the method in \cite{BLPP}, cannot be used for the construction of strong
solutions to SDE's with additive fractional noise, where the drift vector
field belongs to $L^{\infty }(\mathbb{R}^{d})$.

The objective of this paper is to significantly improve the result in \cite%
{CG} with respect to the regularity of the stochastic flow in (\ref{SkewfBm}%
). For example in the case of $H<\frac{1}{2(d+3)}$, under which the authors
obtain a Lipschitz flow, we can show that the flow must be twice locally
Sobolev differentiable. Moreover, we will prove for $H<\frac{1}{2(d-1+2k)}$
that the flow belongs to the Sobolev space $W_{loc}^{k,p}$ for all $p\geq 2$.

The method used in this article is based on "local time variational
calculus" techniques developed in the papers \cite{BLPP}, \cite{CHOP}, \cite%
{BNP}. See also \cite{MMNPZ}, \cite{MP}, \cite{FNP}, \cite{HP} in the case
of a (cylindrical) Wiener process or a L\'{e}vy process.

\section{Main Result}

\bigskip Using a "local time variational calculus" technique for fractional
Brownian motion developed in \cite{BLPP}, \cite{CHOP}, \cite{BNP} we aim at
proving in this section higher order differentiability of the stochastic
flow associated with the SDE (\ref{SkewfBm}).

\bigskip The main result of our paper is the following:

\begin{theorem}

\label{MainResult}Let $H<\frac{1}{2(d-1+2k)}$ for $k\in \mathbb{N}$ and $%
\mathcal{U}\subset \mathbb{R}^{d}$ be a bounded and open set. Further, let $%
X_{t}^{x},$ $0\leq t\leq T$ be the strong solution to (\ref{SkewfBm}) as
constructed in Theorem \ref{Existence}. Then the associated stochastic flow
with respect to (\ref{SkewfBm}) is $k-$times Sobolev differentiable on $%
\mathcal{U}$ $\mu -$a.e.. More precisely, for all $0\leq t\leq T$ 
\begin{equation*}
(x\longmapsto X_{t}^{x})\in \dbigcap\limits_{p\geq 2}L^{2}(\Omega ;W^{k,p}(%
\mathcal{U})).
\end{equation*}

\end{theorem}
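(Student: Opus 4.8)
The plan is to follow the "local time variational calculus" scheme of \cite{BLPP, CHOP, BNP}: first establish the result for a smooth, compactly supported approximation $b_n$ of the singular drift "$\alpha\delta_0\cdot\mathbf 1_d$", obtain uniform (in $n$) bounds on the Sobolev norms of the flow of the approximating SDEs, and then pass to the limit using a compactness criterion. Concretely, let $X_t^{n,x}$ solve $dX_t^{n,x}=b_n(X_t^{n,x})\,dt+dB_t^H$ with $b_n\in C_c^\infty(\mathbb R^d)$ chosen so that $b_n(x)\,dx\to \alpha\,\delta_0\,dx$ weakly; by the proof of Theorem \ref{Existence} in \cite{BLPP}, $X_t^{n,x}\to X_t^x$ in $L^2(\Omega)$ (and the convergence is uniform on compacts in $x$ after passing to a subsequence). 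For smooth drift the flow $x\mapsto X_t^{n,x}$ is genuinely $C^1$ (indeed $C^k$), and differentiating the SDE gives, for a multi-index $\gamma$ with $|\gamma|\le k$, a linear system for $\partial_x^\gamma X_t^{n,x}$ whose coefficients are derivatives of $b_n$ evaluated along the flow. The key identity is that these derivatives, when integrated against the occupation measure of $X^{n}$, can be re-expressed via integration by parts in the Gaussian/Malliavin sense against the fBm, trading each spatial derivative of $b_n$ for a stochastic integral of a Malliavin-weight type — this is exactly where the "intrinsic properties of the fractional Brownian motion" (local nondeterminism, the representation $B^H=\int K_H(t,s)\,dW_s$, and sharp small-ball / variance estimates) enter.

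The heart of the argument is then a quantitative estimate of the form
\begin{equation*}
\Big\| \int_0^t \partial_x^{\gamma} b_n(X_s^{n,x}+y)\,ds \Big\|_{L^2(\Omega)}
\le C_{|\gamma|}\,\|b_n\|_{\infty}\,\big(\text{explicit }t\text{- and }H\text{-dependent factor}\big),
\end{equation*}
uniform in $n$ and in $y$ in a bounded set, valid precisely when $H<\frac{1}{2(d-1+2|\gamma|)}$. The mechanism: expand $b_n$ by Fourier inversion, $b_n(z)=(2\pi)^{-d}\int_{\mathbb R^d}\widehat{b_n}(\xi)e^{i\langle\xi,z\rangle}\,d\xi$, so that $\partial_x^\gamma b_n$ contributes a factor $(i\xi)^\gamma$; insert this into the time integral, take the $2m$-th moment, and reduce to estimating iterated integrals of $\prod_j \mathbb E[\exp(i\langle\xi_j, B^H_{s_j}-B^H_{s_{j-1}}\rangle)]$-type quantities over the simplex $0<s_1<\cdots<s_{2m}<t$. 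Using strong local nondeterminism of $B^H$ one bounds the conditional characteristic functions by Gaussians in the $\xi_j$ with variances $\gtrsim |s_j-s_{j-1}|^{2H}$; the $\xi_j$-integrals then converge and produce a factor $\prod_j |s_j-s_{j-1}|^{-H(d+2|\gamma_j|)}$, and integrability of this over the simplex is exactly the condition $H(d+2|\gamma|-1)<1$, i.e. $H<\frac1{2(d-1+2k)}$ for the top-order term $|\gamma|=k$. This yields a uniform bound
\begin{equation*}
\sup_n \mathbb E\big[\|\partial_x^\gamma X_t^{n,\cdot}\|_{L^p(\mathcal U)}^2\big] <\infty
\end{equation*}
for every $p\ge 2$ and every $|\gamma|\le k$, after feeding the above moment bounds into a Grönwall-type argument for the linearized flow (the linear system for $\partial_x^\gamma X^n$ has an explicit Duhamel solution whose Picard iterates are controlled termwise by the simplex estimate).

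To conclude, I would combine the $L^2(\Omega)$ convergence $X_t^{n,x}\to X_t^x$ with the uniform $L^2(\Omega;W^{k,p}(\mathcal U))$ bound just obtained: since $W^{k,p}(\mathcal U)$ is reflexive for $p\in(1,\infty)$, along a subsequence $X_t^{n,\cdot}$ converges weakly in $L^2(\Omega;W^{k,p}(\mathcal U))$, and the weak limit must agree $\mu$-a.e. with the strong $L^2$-limit $X_t^{\cdot}$; hence $x\mapsto X_t^x$ lies in $W^{k,p}(\mathcal U)$ for $\mu$-a.e. $\omega$, for every $p\ge 2$, and therefore in $\bigcap_{p\ge 2}L^2(\Omega;W^{k,p}(\mathcal U))$. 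The main obstacle is the combinatorial bookkeeping in the high-order moment estimate: when $|\gamma|\ge 2$ the Leibniz/Faà-di-Bruno expansion of $\partial_x^\gamma$ of a composition produces many terms, each a product of lower-order derivatives of $b_n$ along the flow at different times, and one must show that every such term is controlled by the same simplex integral and that the shape of the singularity never gets worse than $\prod_j|s_j-s_{j-1}|^{-H(d+2|\gamma_j|)}$ with $\sum_j|\gamma_j|\le k$ — organizing this so the critical exponent is governed solely by the total order $k$, and verifying the compactness/passage-to-the-limit step transfers through all $k$ derivatives simultaneously, is the delicate part.
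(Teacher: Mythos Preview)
Your overall architecture---approximate $\alpha\delta_0$ by smooth $b_n$, obtain uniform-in-$n$ bounds on $\|\partial_x^\gamma X_t^{n,\cdot}\|_{L^p(\mathcal U)}$ via a Picard/Dyson expansion of the linearized flow, and pass to the limit using reflexivity of $L^2(\Omega;W^{k,p}(\mathcal U))$ to identify the weak limit with $X_t^\cdot$---is exactly the scheme the paper follows. The Fourier/local-nondeterminism mechanism you describe is also what underlies the key estimate (this is Proposition~3.3 of \cite{BLPP}, quoted here as Theorem~14), and your remark that the combinatorics of the Fa\`a-di-Bruno/Leibniz expansion must be organized so that the total derivative order never exceeds $m_1+\cdots+m_k+k-1$ is precisely the content of the shuffle Lemmas~13 and~15.

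There are, however, two concrete points where your sketch diverges from what actually works. First, the step ``re-express via integration by parts in the Gaussian/Malliavin sense, trading each spatial derivative of $b_n$ for a stochastic integral'' is not the mechanism used, and it is not clear it would give uniform-in-$n$ control. The paper instead applies \emph{Girsanov's theorem} (Theorem~11): under a change of measure the solution $X^{n,x}$ has the law of $x+B^H$, and H\"older's inequality separates the Radon--Nikodym density (whose moments are uniformly bounded because the approximating sequence lies in the class $\mathbb L$ of Definition~4) from the iterated integral, which is now a functional of the \emph{Gaussian} process $B^H$. Only then does the Fourier/LND estimate apply. Second, your displayed bound carries $\|b_n\|_\infty$, but for the Dirac approximants $\varphi_{1/n}$ one has $\|\varphi_{1/n}\|_\infty\sim n^{d/2}\to\infty$, so this would destroy uniformity. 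The Fourier argument you outline in fact naturally produces $\|\widehat{b_n}\|_\infty\le\|b_n\|_{L^1}$, and it is the $L^1$ norm (equal to~$1$ for the Gaussian mollifiers) that appears in Theorem~14 and in the final estimate of Lemma~6; you should make sure every term in the combinatorial expansion is bounded in terms of $\|b_n\|_{L^1}$, not $\|b_n\|_\infty$.
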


\begin{remark}
Let us mention here that the regularity result in Theorem \ref{MainResult}
is a significant improvement of that obtained in \cite{CG} in the case of
(distributional) drift vector fields in Besov spaces $B_{\infty ,\infty
}^{\alpha +1}$ for $\alpha >2-1/(2H),$ where the authors prove
Lipschitzianity of the associated stochastic flow.
\end{remark}

\bigskip

In order to prove Theorem \ref{MainResult} we need a some definition and an
auxiliary result:

\begin{definition}
\label{Class}\bigskip\ Let $H<\frac{1}{2(d+2)}$. We then denote by $\mathbb{L%
}$ the class of sequences of vector fields $\varphi _{n}:[0,T]\times \mathbb{%
R}^{d}\longrightarrow \mathbb{R},$ $n\geq 1$ such that the SDE 
\begin{equation}
Y_{t}^{x}=y+\int_{0}^{t}\varphi _{n}(u,Y_{u}^{y})\cdot \mathbf{1}%
_{d}du+B_{t}^{H},\text{ }0\leq t\leq T  \label{SDEPhi}
\end{equation}%

admits a unique strong solution for all $y\in \mathbb{R}^{d}$, $n\geq 1$ and
such that%

\begin{equation*}
\int_{0}^{\cdot }\varphi _{n}(u,B_{u}^{H}+y)du\in I_{0^{+}}^{H+\frac{1}{2}%
}(L^{2})
\end{equation*}%

for all $y\in \mathbb{R}^{d}$, $n\geq 1$ as well as%

\begin{equation*}
\sup_{n\geq 1}E\left[ \exp (k\int_{0}^{T}(K_{H}^{-1}(\int_{0}^{\cdot
}\varphi _{n}(u,B_{u}^{H}+y)du)(s))^{2}ds)\right] <\infty 
\end{equation*}%

for all $k\in \mathbb{R}$. See the Appendix for the definition of the space $%
I_{0^{+}}^{H+\frac{1}{2}}(L^{2})$ and the operator $K_{H}^{-1}$.

\end{definition}

\begin{remark}
\label{RemarkPhi} It follows from Lemma \ref{expmom} in the Appendix that
the approximation sequence $\varphi _{x,\varepsilon }$ for $\varepsilon
=1/n,n\geq 1$ with respect to the Dirac delta function in $x$ in (\ref%
{DiracApproximation}) belongs to the class $\mathbb{L}$.
\end{remark}

\bigskip

The proof of Theorem \ref{MainResult} mainly relies on the following
estimate (compare Lemma 7 in \cite{CHOP} and Theorem 5.1 in \cite{BNP}):

\begin{lemma}
\bigskip \label{HigherOrderDerivative}Assume that $H<\frac{1}{2(d-1+2k)}$.
Let $\varphi _{n}:\mathbb{R}^{d}\longrightarrow \mathbb{R},$ $n\geq 1$
belong to $\mathbb{L}$ in Definition \ref{Class} and $\varphi _{n}\in 
\mathcal{S}(\mathbb{R}^{d})$ (Schwartz function space) for all $n\geq 1$.
Denote by $X_{t}^{x,n},$ $0\leq t\leq T$ the strong solution to the SDE (\ref%
{SDEPhi}) with respect to the drift $\varphi _{n}$ for each $n\geq 1$. Fix
integers $p\geq 2$. Then 

\begin{equation*}
\sup_{x\in \mathbb{R}^{d}}E\left[ \left\Vert \frac{\partial ^{k}}{\partial
x^{k}}X_{t}^{x,n}\right\Vert ^{p}\right] \leq M\cdot
C_{p,k,H,d,T}(\left\Vert \varphi _{n}\right\Vert _{L^{1}(\mathbb{R}%
^{d})})<\infty 
\end{equation*}%

for all $n\geq 1$ for some continuous function $C_{p,k,H,d,T}:[0,\infty
)^{2}\longrightarrow \lbrack 0,\infty )$ and a constant $M$ depending only
on $\varphi _{n},$ $n\geq 1$ and $p$.
\end{lemma}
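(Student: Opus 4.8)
The strategy is to obtain the $L^p(\Omega)$-bound on $\partial^k_x X^{x,n}_t$ by differentiating the flow $k$ times in $x$, expanding the resulting (random) derivative process via a Picard/variation-of-constants type iteration, and then estimating each term of the expansion by a Fourier-analytic device that turns the singular local-time drift into an integrable quantity after integration against the Gaussian density of the fractional Brownian motion. Concretely, since $\varphi_n\in\mathcal S(\mathbb R^d)$, the map $x\mapsto X^{x,n}_t$ is smooth, and the first variation $Z^{x,n}_t:=\frac{\partial}{\partial x}X^{x,n}_t$ solves the linear equation $Z^{x,n}_t = I_d + \int_0^t D\varphi_n(u,X^{x,n}_u)\,Z^{x,n}_u\,du$ (tensored with $\mathbf 1_d$); higher derivatives $\frac{\partial^k}{\partial x^k}X^{x,n}_t$ satisfy analogous linear equations with inhomogeneities built from lower-order derivatives and higher derivatives of $\varphi_n$. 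Iterating these equations produces a sum, indexed by partitions/trees, of multiple time-ordered integrals of products of derivatives $D^{j}\varphi_n$ evaluated along the solution path.

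The heart of the argument is to bound each such multiple integral in $L^p(\Omega)$. Here I would follow the "local time variational calculus" scheme of \cite{BLPP}, \cite{CHOP}, \cite{BNP}: replace each derivative $D^{j}\varphi_n$ by its Fourier transform, so that $D^j\varphi_n(u,y)=\int_{\mathbb R^d}(i\xi)^{\otimes j}\widehat{\varphi_n}(u,\xi)e^{i\langle\xi,y\rangle}\,d\xi$, and substitute $y=X^{x,n}_u$. Using Girsanov's theorem (justified precisely by the exponential-moment condition built into the class $\mathbb L$ in Definition \ref{Class}), one replaces $X^{x,n}_u$ by $B^H_u+x$ at the cost of a Radon–Nikodym density whose $L^q(\Omega)$-norms are uniformly bounded in $n$. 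One is then left to estimate expectations of products of complex exponentials $\exp(i\langle\xi_\ell, B^H_{u_\ell}+x\rangle)$ over ordered simplices in the $u_\ell$'s — these are pure fBm computations. The key input is the \emph{local nondeterminism} of fBm, which gives Gaussian bounds of the form $\big|E[\exp(i\sum_\ell\langle\xi_\ell,B^H_{u_\ell}\rangle)]\big|\le \exp(-c\sum_\ell|\xi_\ell|^2\,|u_\ell-u_{\ell-1}|^{2H})$ on the ordered simplex; integrating the Gaussian powers $|\xi_\ell|^j$ against this bound produces factors $|u_\ell-u_{\ell-1}|^{-H(d+2j)}$, and the ordered time integral over the simplex converges precisely when the accumulated exponents stay $>-1$. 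This is exactly where the hypothesis $H<\frac{1}{2(d-1+2k)}$ enters: the worst term in the expansion involves $k$ first-order derivatives, contributing exponent $-H(d+2)$ at each of $k$ consecutive time-differences plus one more $-Hd$ from the terminal evaluation, and the requirement that the iterated integral over the $k$-simplex be finite is $H(d+2)\cdot(\text{something})<\ldots$, which after Beta-function bookkeeping reduces to $H(d-1+2k)<\tfrac12$. The resulting constant is of the claimed form $C_{p,k,H,d,T}(\|\varphi_n\|_{L^1})$ because each $\widehat{\varphi_n}$ appears as $\|\widehat{\varphi_n}\|_\infty\le\|\varphi_n\|_{L^1}$ and the number of terms, the Beta-integral values, and the Girsanov $L^q$-norms depend only on $p,k,H,d,T$.

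For the $L^p(\Omega)$ (rather than $L^1$) statement, I would raise everything to the $p$-th power at the outset — i.e., estimate $E[\|\partial^k_x X^{x,n}_t\|^p]$ by expanding the $p$-fold product, which multiplies the number of time variables by $p$ but leaves the structure intact; the local nondeterminism estimate still applies to the enlarged (still Gaussian) family, and the convergence condition on the exponents is unchanged (it is governed by the largest block of consecutive same-type increments, not by $p$). The uniformity in $x\in\mathbb R^d$ is automatic because, after the Girsanov reduction, $x$ enters only through the modulus-one factor $e^{i\langle\sum\xi_\ell,x\rangle}$ and through the law of $B^H$, which is translation-invariant in the relevant sense; the shift $B^H_u+x$ can be absorbed since only $|\,\cdot\,|$ of the exponential is used.

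**Main obstacle.** The technically delicate point — and the step I expect to be hardest — is the combinatorial-analytic control of the full iterated expansion uniformly: organizing the sum over "trees" of derivatives $D^{j_1}\varphi_n,\dots,D^{j_m}\varphi_n$ with $\sum(j_\ell-1)+m$ related to $k$, assigning to each the correct product of singular time-factors coming from local nondeterminism, and verifying that \emph{every} term (not just the extremal one) satisfies the simplex-integrability condition under the single hypothesis $H<\frac{1}{2(d-1+2k)}$. Making the Girsanov change of measure rigorous along the (non-Markov, non-semimartingale) fBm — which is precisely why Definition \ref{Class} is formulated through the operator $K_H^{-1}$ and the exponential-moment bound, and why one passes through the Schwartz-function approximation $\varphi_n$ rather than the local time directly — is the other place where care is needed, but that machinery is inherited from \cite{BLPP}, \cite{CHOP}, \cite{BNP} and can be invoked.
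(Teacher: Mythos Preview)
Your plan is essentially the paper's proof: Picard-expand the variation equation, use Girsanov (via the class $\mathbb L$) to replace $X^{x,n}_u$ by $x+B^H_u$, and then invoke the Fourier/local-nondeterminism estimate for iterated integrals along $B^H$ (packaged in the paper as Theorem~\ref{mainestimate2}); the $L^p$ bound is obtained by raising to a power $2^q$ and using the shuffle identity (Lemma~\ref{partialshuffle}) to reduce back to a single ordered simplex.

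One point in your sketch is wrong and would lead you to the wrong threshold if followed literally. You write that ``the worst term in the expansion involves $k$ first-order derivatives''. In fact the $k$-th spatial derivative of the flow, when expanded via Leibniz/chain rule, produces products that contain a single factor $D^{k}b$ (together with lower-order factors $D^{j}b$, $j\le k-1$, and many copies of $Db$ from the Picard series). It is this single factor of order $k$ that forces the constraint: in Theorem~\ref{mainestimate2} the hypothesis is a \emph{per-factor} condition $H<\tfrac{1/2}{\,d-1+2\sum_l\alpha_j^{(l)}\,}$, so the presence of one factor with $\sum_l\alpha_j^{(l)}=k$ gives exactly $H<\tfrac{1}{2(d-1+2k)}$. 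If you instead tracked only products of first-order factors you would recover merely $H<\tfrac{1}{2(d+1)}$ and miss the actual obstruction. The ``combinatorial-analytic control'' you flag as the main obstacle is handled in the paper precisely by Lemma~\ref{partialshuffle} (to flatten nested simplices) together with Lemma~\ref{OrderDerivatives} (to count the total order $|\alpha|=m_1+\cdots+m_k+k-1$ in each product); the summability of the resulting series over $m_1,\dots,m_k$ then follows from the Gamma-function denominator in Theorem~\ref{mainestimate2}, once the per-factor condition holds.
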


\begin{proof}
To simplify notation we set $b=\varphi _{n}\cdot \mathbf{1}_{d}$ for a fixed 
$n\geq 1$ and denote the corresponding solution by $X_{t}^{x}=X_{t}^{x,n},$ $%
0\leq t\leq T$. Since \ the stochastic flow associated with the smooth
vector field $b$ is also smooth (compare to e.g. \cite{Kunita}), we obtain
that%

\begin{equation}
\frac{\partial }{\partial x}X_{t}^{x}=I_{d\times
d}+\int_{s}^{t}Db(X_{u}^{x})\cdot \frac{\partial }{\partial x}X_{u}^{x}du,
\end{equation}%

where $Db:\mathbb{R}^{d}\longrightarrow L(\mathbb{R}^{d},\mathbb{R}^{d})$ is
the derivative of $b$ with respect to the space variable.

Using Picard iteration, we see that%

\begin{equation}
\frac{\partial }{\partial x}X_{t}^{x}=I_{d\times d}+\sum_{m\geq
1}\int_{\Delta
_{0,t}^{m}}Db(X_{u_{1}}^{x})\ldots Db(X_{u_{m}}^{x})du_{m}\ldots du_{1},
\label{FirstOrder}
\end{equation}%

where%

\begin{equation*}
\Delta _{s,t}^{m}=\{(u_{m},\ldots u_{1})\in \lbrack 0,T]^{m}:\theta
<u_{m}<\ldots <u_{1}<t\}.
\end{equation*}

By differentiating both sides with respect to $x$ in connection with
dominated convergence, we also get that%

\begin{equation*}
\frac{\partial ^{2}}{\partial x^{2}}X_{t}^{x}=\sum_{m\geq 1}\int_{\Delta
_{0,t}^{m}}\frac{\partial }{\partial x}%
[Db(X_{u_{1}}^{x})\ldots Db(X_{u_{m}}^{x})]du_{m}\ldots du_{1}.
\end{equation*}%

Using the Leibniz and chain rule, we have that%

\begin{eqnarray*}
\frac{\partial }{\partial x}[Db(X_{u_{1}}^{x})\ldots Db(X_{u_{m}}^{x})] = \sum_{r=1}^{m}Db(X_{u_{1}}^{x})\ldots D^{2}b(X_{u_{r}}^{x})\frac{\partial }{%
\partial x}X_{u_{r}}^{x}\ldots Db(X_{u_{m}}^{x}),
\end{eqnarray*}%
where $D^{2}b=D(Db):\mathbb{R}^{d}\longrightarrow L(\mathbb{R}^{d},L(\mathbb{%
R}^{d},\mathbb{R}^{d}))$.

\bigskip

So it follows from (\ref{FirstOrder}) that%

\begin{align}
\frac{\partial ^{2}}{\partial x^{2}}X_{t}^{x} =&\sum_{m_{1}\geq 1}\int_{\Delta
_{0,t}^{m_{1}}}\sum_{r=1}^{m_{1}}Db(X_{u_{1}}^{x})\ldots D^{2}b(X_{u_{r}}^{x}) 
\notag \\
&\times \left( I_{d\times d}+\sum_{m_{2}\geq 1}\int_{\Delta
_{0,u_{r}}^{m_{2}}}Db(X_{v_{1}}^{x})\ldots Db(X_{v_{m_{2}}}^{x}) dv_{m_{2}}\ldots  dv_{1}\right)
\notag \\
&\times Db(X_{u_{r+1}}^{x})\ldots  Db(X_{u_{m_{1}}}^{x}) du_{m_{1}}\ldots  du_{1} 
\notag \\
=&\sum_{m_{1}\geq 1}\sum_{r=1}^{m_{1}}\int_{\Delta
_{0,t}^{m_{1}}}Db(X_{u_{1}}^{x})\ldots  D^{2}b(X_{u_{r}}^{x})\ldots  Db(X_{u_{m_{1}}}^{x})du_{m_{1}}\ldots  du_{1}
\notag \\
&+\sum_{m_{1}\geq 1}\sum_{r=1}^{m_{1}}\sum_{m_{2}\geq 1}\int_{\Delta
_{0,t}^{m_{1}}}\int_{\Delta
_{0,u_{r}}^{m_{2}}}Db(X_{u_{1}}^{x})\ldots D^{2}b(X_{u_{r}}^{x})  \notag \\
&\times
Db(X_{v_{1}}^{x})\ldots Db(X_{v_{m_{2}}}^{x})Db(X_{u_{r+1}}^{x})\ldots Db(X_{u_{m_{1}}}^{x})
\notag \\
&dv_{m_{2}}\ldots dv_{1}du_{m_{1}}\ldots du_{1}  \notag \\
=&: I_{1}+I_{2}.  \label{SecondOrder}
\end{align}

\bigskip

We now aim at applying Lemma \ref{OrderDerivatives} (in connection with
Lemma \ref{partialshuffle}) to the term $I_{2}$ in (\ref{SecondOrder}) and
find that%

\begin{equation}
I_{2}=\sum_{m_{1}\geq 1}\sum_{r=1}^{m_{1}}\sum_{m_{2}\geq 1}\int_{\Delta
_{0,t}^{m_{1}+m_{2}}}\mathcal{H}%
_{m_{1}+m_{2}}^{X}(u)du_{m_{1}+m_{2}}\ldots du_{1}  \label{l2}
\end{equation}%

for $u=(u_{1},\ldots ,u_{m_{1}+m_{2}}),$ where the integrand $\mathcal{H}%
_{m_{1}+m_{2}}^{X}(u)\in \mathbb{R}^{d}\otimes \mathbb{R}^{d}\otimes \mathbb{%
R}^{d}$ has entries given by sums of at most $C(d)^{m_{1}+m_{2}}$ terms,
which are products of length $m_{1}+m_{2}$ of functions belonging to the set%

\begin{equation*}
\left\{ \frac{\partial ^{\gamma ^{(1)}+\ldots +\gamma ^{(d)}}}{\partial ^{\gamma
^{(1)}}x_{1}\ldots \partial ^{\gamma ^{(d)}}x_{d}}b^{(r)}(X_{u}^{x}),\text{ }%
r=1,\ldots ,d,\text{ }\gamma ^{(1)}+\ldots +\gamma ^{(d)}\leq 2,\text{ }\gamma
^{(l)}\in \mathbb{N}_{0},\text{ }l=1,\ldots ,d\right\} .
\end{equation*}%

Here it is important note that the terms in these products for which we have equality in \[\gamma ^{(1)}+\ldots +\gamma ^{(d)}\leq 2\] appear only once in (\ref{l2}). So the total order of derivatives $\left\vert \alpha
\right\vert $ (in the sense of Lemma \ref%
{OrderDerivatives} in the Appendix) of those products of functions is given by%

\begin{equation}
\left\vert \alpha \right\vert =m_{1}+m_{2}+1.
\end{equation}%

Let us choose $p,c,r\in \lbrack 1,\infty )$ such that $cp=2^{q}$ for some
integer $q$ and $\frac{1}{r}+\frac{1}{c}=1.$ Then we can use H\"{o}lder's
inequality and Girsanov's theorem (see Theorem \ref{girsanov}) in connection
with Definition \ref{Class} and obtain that%

\[
E[\left\Vert I_{2}\right\Vert ^{p}] 
\leq M\left( \sum_{m_{1}\geq 1}\sum_{r=1}^{m_{1}}\sum_{m_{2}\geq
1}\sum_{i\in I}\left\Vert \int_{\Delta _{0,t}^{m_{1}+m_{2}}}\mathcal{H}%
_{i}^{B^{H}}(u)du_{m_{1}+m_{2}}\ldots du_{1}\right\Vert _{L^{2^{q}}(\Omega ;%
\mathbb{R})}\right) ^{p},  \label{Lp}
\]

where $M<\infty $ is a constant depending only on $\varphi _{n},n\geq 1$ and 
$p$. Here $\#I\leq K^{m_{1}+m_{2}}$ for a constant $K=K(d)$ and the
integrands $\mathcal{H}_{i}^{B^{H}}(u)$ are of the form

\begin{equation*}
\mathcal{H}_{i}^{B^{H}}(u)=\dprod%
\limits_{l=1}^{m_{1}+m_{2}}h_{l}(u_{l}),h_{l}\in \Lambda ,l=1,\ldots ,m_{1}+m_{2}
\end{equation*}%

where 

\begin{equation*}
\Lambda :=\left\{ 
\begin{array}{c}
\frac{\partial ^{\gamma ^{(1)}+\ldots +\gamma ^{(d)}}}{\partial ^{\gamma
^{(1)}}x_{1}\ldots \partial ^{\gamma ^{(d)}}x_{d}}b^{(r)}(x+B_{u}^{H}),\text{ }%
r=1,\ldots ,d, \\ 
\gamma ^{(1)}+\ldots +\gamma ^{(d)}\leq 2,\text{ }\gamma ^{(l)}\in \mathbb{N}%
_{0},\text{ }l=1,\ldots ,d%
\end{array}%
\right\} .
\end{equation*}%

Also here functions with second order derivatives only appear once in those
products.

Set 

\begin{equation*}
J=\left( \int_{\Delta _{0,t}^{m_{1}+m_{2}}}\mathcal{H}%
_{i}^{B^{H}}(u)du_{m_{1}+m_{2}}\ldots du_{1}\right) ^{2^{q}}.
\end{equation*}%

Using Lemma \ref{partialshuffle} once more, successively $q$-times, we
find that $J$ can be written as a sum of length at most $%
K(q)^{m_{1}+m_{2}}$ with summands of the form%

\begin{equation}
\int_{\Delta
_{0,t}^{2^{q}(m_{1}+m_{2})}}\dprod%
\limits_{l=1}^{2^{q}(m_{1}+m_{2})}f_{l}(u_{l})du_{2^{q}(m_{1}+m_{2})}\ldots du_{1},
\label{f}
\end{equation}%

where $f_{l}\in \Lambda $ for all $l$.

Here the number of factors $f_{l}$ in the above product having a second
order derivative is exactly $2^{q}.$ So the total order of the derivatives
involved in (\ref{f}) in the sense of Lemma \ref{OrderDerivatives} (where
one in that lemma formally replaces $X_{u}^{x}$ by $x+B_{u}^{H}$ in the
corresponding expressions) is given by%
\begin{equation}
\left\vert \alpha \right\vert =2^{q}(m_{1}+m_{2}+1).  \label{alpha2}
\end{equation}

Now we can apply Theorem \ref{mainestimate2} for $m=2^{q}(m_{1}+m_{2})$ and $%
\varepsilon _{j}=0$ and get that%

\begin{eqnarray*}
&&\left\vert E\left[ \int_{\Delta
_{0,t}^{2^{q}(m_{1}+m_{2})}}\dprod%
\limits_{l=1}^{2^{q}(m_{1}+m_{2})}f_{l}(u_{l})du_{2^{q}(m_{1}+m_{2})}\ldots du_{1}%
\right] \right\vert  \\
&\leq &C^{m_{1}+m_{2}}(\left\Vert b\right\Vert _{L^{1}(\mathbb{R}%
^{d})})^{2^{q}(m_{1}+m_{2})} \\
&&\times \frac{((2(2^{q}(m_{1}+m_{2}+1))!)^{1/4}}{\Gamma
(-H(2d2^{q}(m_{1}+m_{2})+2^2 2^{q}(m_{1}+m_{2}+1))+22^{q}(m_{1}+m_{2}))^{1/2}}
\end{eqnarray*}%

\bigskip

for a constant $C$ depending on $H,T,d$ and $q$.

So the latter combined with (\ref{Lp}) shows that%

\begin{eqnarray*}
&&E[\left\Vert I_{2}\right\Vert ^{p}] \\
&\leq &M\left( \sum_{m_{1}\geq 1}\sum_{m_{2}\geq
1}K^{m_{1}+m_{2}}((\left\Vert b\right\Vert _{L^{1}(\mathbb{R}%
^{d})})^{2^{q}(m_{1}+m_{2})}\right.  \\
&&\left. \times \frac{((2(2^{q}(m_{1}+m_{2}+1))!)^{1/4}}{\Gamma
(-H(2d2^{q}(m_{1}+m_{2})+2^2 2^{q}(m_{1}+m_{2}+1))+2 2^{q}(m_{1}+m_{2}))^{1/2}}%
)^{1/2^{q}}\right) ^{p}
\end{eqnarray*}%

\bigskip

for a constant $K$ depending on $H,$ $T,$ $d,$ $p$ and $q$.

\bigskip

Since $\frac{1}{2(d+3)}\leq \frac{1}{2(d+2\frac{m_{1}+m_{2}+1}{m_{1}+m_{2}})}
$ for $m_{1},$ $m_{2}\geq 1$, the above sum converges if $H<\frac{1}{2(d+3)}$%
.

On the other hand one derives in the same way a similar estimate for $%
E[\left\Vert I_{1}\right\Vert ^{p}]$. Altogether the proof follows for $k=2$.

\bigskip

We now explain the generalization of the latter reasoning to the case $k\geq
2$: In this case, we find that%

\begin{equation}
\frac{\partial ^{k}}{\partial x^{k}}X_{t}^{x}=I_{1}+\ldots +I_{2^{k-1}},
\label{Ik}
\end{equation}%

where each $I_{i},$ $i=1,\ldots ,2^{k-1}$ is a sum of iterated integrals over
simplices of the form $\Delta _{0,u}^{m_{j}},$ $0<u<t,$ $j=1,\ldots ,k$ with
integrands having at most one product factor $D^{k}b$, whereas the other
factors are of the form $D^{j}b, j\leq k-1$.

For convenience, we introduce the following notation: For given
multi-indices $m.=(m_{1},\ldots ,m_{k})$ and $r:=(r_{1},\ldots ,r_{k-1})$ we define%

\begin{equation*}
m_{j}^{-}:=\sum_{i=1}^{j}m_{i}\text{ }
\end{equation*}%

and%

\begin{equation*}
\sum_{\substack{ m\geq 1 \\ r_{l}\leq m_{l}^{-} \\ l=1,\ldots ,k-1}}%
:=\sum_{m_{1}\geq 1}\sum_{r_{1}=1}^{m_{1}}\sum_{m_{2}\geq
1}\sum_{r_{2}=1}^{m_{2}^{-}}\ldots \sum_{r_{k-1}=1}^{m_{k-1}^{-}}\sum_{m_{k}\geq
1}.
\end{equation*}%

In what follows, we confine ourselves without loss of generality to the
estimation of the term $I_{2^{k-1}}$ in (\ref{Ik}). Just as in the case $k=2,
$ we obtain by employing Lemma \ref{OrderDerivatives} (in connection with
Lemma \ref{partialshuffle}) that 

\begin{equation}
I_{2^{k-1}}=\sum_{\substack{ m\geq 1 \\ r_{l}\leq m_{l}^{-} \\ l=1,\ldots ,k-1}}%
\int_{\Delta _{0,t}^{m_{1}+\ldots +m_{k}}}\mathcal{H}%
_{m_{1}+\ldots +m_{k}}^{X}(u)du_{m_{1}+m_{2}}\ldots du_{1}
\end{equation}%

for $u=(u_{m_{1}+\ldots +m_{k}},\ldots ,u_{1}),$ where the integrand $\mathcal{H}%
_{m_{1}+\ldots +m_{k}}^{X}(u)\in \otimes _{j=1}^{k+1}\mathbb{R}^{d}$ has entries
given by sums of at most $C(d)^{m_{1}+\ldots +m_{k}}$ terms, which are products
of length $m_{1}+\ldots + m_{k}$ of functions, which are elements in%

\begin{equation*}
\left\{ 
\begin{array}{c}
\frac{\partial ^{\gamma ^{(1)}+\ldots +\gamma ^{(d)}}}{\partial ^{\gamma
^{(1)}}x_{1}\ldots \partial ^{\gamma ^{(d)}}x_{d}}b^{(r)}(X_{u}^{x}),r=1,\ldots ,d,
\\ 
\gamma ^{(1)}+\ldots +\gamma ^{(d)}\leq k,\gamma ^{(l)}\in \mathbb{N}%
_{0},l=1,\ldots ,d%
\end{array}%
\right\} .
\end{equation*}%

As in the case $k=2$ we can apply Lemma \ref{OrderDerivatives} in the
Appendix and find that the total order of derivatives $\left\vert \alpha
\right\vert $ of those products of functions is 

\begin{equation}
\left\vert \alpha \right\vert =m_{1}+\ldots + m_{k}+k-1.
\end{equation}%

Then we proceed as before and choose $p,c,r\in \lbrack 1,\infty )$ such that 
$cp=2^{q}$ for some integer $q$ and $\frac{1}{r}+\frac{1}{c}=1$ and get by
using H\"{o}lder's inequality and Girsanov's theorem (see Theorem \ref%
{girsanov}) in connection with Definition \ref{Class} that%

\begin{eqnarray}
&&E[\left\Vert I_{2^{k-1}}\right\Vert ^{p}]  \notag \\
&\leq &M\left( \sum_{\substack{ m\geq 1 \\ r_{l}\leq m_{l}^{-} \\ l=1,\ldots ,k-1
}}\sum_{i\in I}\left\Vert \int_{\Delta _{0,t}^{m_{1}+m_{2}}}\mathcal{H}%
_{i}^{B^{H}}(u)du_{m_{1}+\ldots +m_{k}}\ldots du_{1}\right\Vert _{L^{2^{q}}(\Omega ;%
\mathbb{R})}\right) ^{p}  \label{Lp2}
\end{eqnarray}%

where $M<\infty $ is a constant depending only on $\varphi _{n},n\geq 1$ and 
$p$. Here $\#I\leq K^{m_{1}+\ldots +m_{k}}$ for a constant $K=K(d)$ and the
integrands $\mathcal{H}_{i}^{B^{H}}(u)$ take the form 

\begin{equation*}
\mathcal{H}_{i}^{B^{H}}(u)=\dprod\limits_{l=1}^{m_{1}+\ldots +m_{k}}h_{l}(u_{l}),%
\text{ }h_{l}\in \Lambda ,\text{ }l=1,\ldots ,m_{1}+\ldots +m_{k},
\end{equation*}%

where 

\begin{equation*}
\Lambda :=\left\{ 
\begin{array}{c}
\frac{\partial ^{\gamma ^{(1)}+\ldots +\gamma ^{(d)}}}{\partial ^{\gamma
^{(1)}}x_{1}\ldots \partial ^{\gamma ^{(d)}}x_{d}}b^{(r)}(x+B_{u}^{H}),\text{ }%
r=1,\ldots ,d, \\ 
\gamma ^{(1)}+\ldots +\gamma ^{(d)}\leq k,\text{ }\gamma ^{(l)}\in \mathbb{N}%
_{0},\text{ }l=1,\ldots ,d%
\end{array}%
\right\} .
\end{equation*}%

Let 

\begin{equation*}
J=\left( \int_{\Delta _{0,t}^{m_{1}+\ldots +m_{k}}}\mathcal{H}%
_{i}^{B^{H}}(u)du_{m_{1}+\ldots +m_{k}}\ldots du_{1}\right) ^{2^{q}}.
\end{equation*}%

Again repeated use of Lemma \ref{partialshuffle} in the Appendix shows that $%
J$ can be represented as a sum of, at most of length $K(q)^{m_{1}+\ldots .m_{k}}$
with summands of the form%

\begin{equation}
\int_{\Delta
_{0,t}^{2^{q}(m_{1}+\ldots +m_{k})}}\dprod%
\limits_{l=1}^{2^{q}(m_{1}+\ldots +m_{k})}f_{l}(u_{l})du_{2^{q}(m_{1}+\ldots .+m_{k})}\ldots du_{1},
\label{f2}
\end{equation}%

where $f_{l}\in \Lambda $ for all $l$.

Using once more Lemma \ref{OrderDerivatives} (where one in that Lemma
formally replaces $X_{u}^{x}$ by $x+B_{u}^{H}$ in the corresponding terms)
it follows that the total order of the derivatives in the products of
functions in (\ref{f2}) is given by%

\begin{equation}
\left\vert \alpha \right\vert =2^{q}(m_{1}+\ldots +m_{k}+k-1).
\end{equation}

Then Proposition \ref{mainestimate2} for $m=2^{q}(m_{1}+\ldots +m_{k})$ and $%
\varepsilon _{j}=0$ yields%

\begin{eqnarray*}
&&\left\vert E\left[ \int_{\Delta
_{0,t}^{2^{q}(m_{1}+\ldots +m_{k})}}\dprod%
\limits_{l=1}^{2^{q}(m_{1}+\ldots +m_{k})}f_{l}(u_{l})du_{2^{q}(m_{1}+\ldots +m_{k})}\ldots du_{1}%
\right] \right\vert  \\
&\leq &C^{m_{1}+\ldots +m_{k}}(\left\Vert b\right\Vert _{L^{1}(\mathbb{R}%
^{d})})^{2^{q}(m_{1}+\ldots +m_{k})} \\
&&\times \frac{((2(2^{q}(m_{1}+\ldots +m_{k}+k-1))!)^{1/4}}{\Gamma
(-H(2d2^{q}(m_{1}+\ldots +m_{k})+2^2 2^{q}(m_{1}+\ldots +m_{k}+k-1))+22^{q}(m_{1}+\ldots +m_{k}))^{1/2}%
}
\end{eqnarray*}%

for a constant $C$ depending on $H,$ $T,$ $d$ and $q$.

Hence (\ref{Lp2}) implies that%

\begin{eqnarray*}
&&E[\left\Vert I_{2^{k-1}}\right\Vert ^{p}] \\
&\leq &M\left( \sum_{m_{1}\geq 1}\ldots \sum_{m_{k}\geq
1}K^{m_{1}+\ldots +m_{k}}((\left\Vert b\right\Vert _{L^{1}(\mathbb{R}%
^{d})})^{2^{q}(m_{1}+\ldots +m_{k})}\right.  \\
&&\left. \times \frac{((2(2^{q}(m_{1}+\ldots +m_{k}+k-1))!)^{1/4}}{\Gamma
(-H(2d2^{q}(m_{1}+\ldots +m_{k})+2^2 2^{q}(m_{1}+\ldots +m_{k}+k-1))+22^{q}(m_{1}+\ldots +m_{k}))^{1/2}%
})^{1/2^{q}}\right) ^{p} \\
&\leq &M\left( \sum_{m\geq 1}\sum_{\substack{ l_{1},\ldots ,l_{k}\geq 0: \\ %
l_{1}+\ldots +l_{k}=m}}K^{m}((\left\Vert b\right\Vert _{L^{1}(\mathbb{R}%
^{d})})^{2^{q}m}\right.  \\
&&\left. \times \frac{((2(2^{q}(m+k-1))!)^{1/4}}{\Gamma
(-H(2d2^{q}m+2^2 2^{q}(m+k-1))+22^{q}m)^{1/2}})^{1/2^{q}}\right) ^{p}
\end{eqnarray*}%

for a constant $K$ depending on $H,$ $T,$ $d,$ $p$ and $q$.

Since we assumed that $H<$ $\frac{1}{2(d-1+2k)}$ the above sum converges. So
the proof follows.
\end{proof}

Using Lemma \ref{HigherOrderDerivative} we can now prove the main result.

\bigskip

\begin{proof}[Proof of Theorem \protect\ref{MainResult}]
Following the ideas in Theorem 5.2 in \cite{BNP} or Proposition 4.2 in \cite%
{MMNPZ}, we approximate the Dirac distribution $\delta _{0}$ in zero by $%
\varphi _{1/n},$ where $\varphi _{\varepsilon }\in \mathcal{S}(\mathbb{R}%
^{d}),$ $\varepsilon >0$ is given as in (\ref{Dirac}). Set $b_{n}=\varphi
_{1/n}\cdot \mathbf{1}_{d}$. Denote by $X_{t}^{n,x},$ $0\leq t\leq T$ the
solution to (\ref{SDEPhi}) associated with the vector field $b_{n}$,
starting in $x$. Let $\phi \in C_{c}^{\infty }(\mathcal{U};\mathbb{R}^{d})$
and define for fixed $t\in \lbrack 0,T]$ the sequence of random variables%

\begin{equation*}
\left\langle X_{t}^{n,\cdot },\phi \right\rangle :=\int_{\mathcal{U}%
}\left\langle X_{t}^{n,x},\phi \right\rangle _{\mathbb{R}^{d}}dx,\text{ }%
n\geq 1
\end{equation*}%

Using the same reasoning as in the proof of Theorem 5.2 in \cite{BNP}, which
is based on a compactness criterion for square integrable funtionals of
Wiener processes (see \cite{DMN}), combined with the estimates of Lemma 5.6
in \cite{BLPP} one shows that there exists a subsequence $n_{j},$ $j\geq 1$
such that%

\begin{equation}
\left\langle X_{t}^{n_{j},\cdot },\phi \right\rangle \underset{%
j\longrightarrow \infty }{\longrightarrow }\left\langle X_{t}^{\cdot },\phi
\right\rangle   \label{nj}
\end{equation}%

in $L^{2}(\Omega )$ strongly for all $\phi \in C_{c}^{\infty }(\mathcal{U};%
\mathbb{R}^{d})$, where $X_{s}^{x},$ $0\leq s\leq T$ is the strong solution
of Theorem \ref{Existence}. Note that we also have that%

\begin{equation*}
X_{t}^{n,x}\underset{n\longrightarrow \infty }{\longrightarrow }X_{t}^{x}
\end{equation*}%

in $L^{2}(\Omega )$ strongly.\ See Corollary 5.7 in \cite{BLPP}.

On the other hand, it follows from Lemma \ref{HigherOrderDerivative} that%

\begin{eqnarray*}
&&\sup_{n\geq 1}\left\Vert X_{t}^{n,\cdot }\right\Vert _{L^{2}(\Omega
;W^{k,p}(\mathcal{U}))}^{2} \\
&\leq &\sum_{i=0}^{k}\left( \int_{\mathcal{U}}\sup_{n\geq 1}E\left[
\left\Vert D^{i}X_{t}^{n,x}\right\Vert ^{p}\right] dx\right) ^{\frac{2}{p}%
}<\infty 
\end{eqnarray*}%

for $H<\frac{1}{2(d-1+2k)}.$

Since $L^{2}(\Omega ;W^{k,p}(\mathcal{U}))$ is reflexive for $p>1,$ there
exists a subsequence $n_{j},j\geq 1$ such that%

\begin{equation*}
X_{t}^{n,\cdot }\underset{j\longrightarrow \infty }{\longrightarrow }Y
\end{equation*}%

in $L^{2}(\Omega ;W^{k,p}(\mathcal{U}))$ weakly. For convenience, let $%
n_{j},j\geq 1$ be the same subsequence as in (\ref{nj}). Further, we have
for all $A\in \mathcal{F},$ $\phi \in C_{c}^{\infty }(\mathcal{U};\mathbb{R}%
^{d}),$ $\alpha ^{(1)}+\ldots +\alpha ^{(d)}\leq k$ with $\alpha ^{(i)}\in 
\mathbb{N}_{0},$ $i=1,\ldots ,d$ that%

\begin{eqnarray*}
&&E\left[ 1_{A}\left\langle X_{t}^{n_{j},\cdot },\frac{\partial ^{\alpha
^{(1)}+\ldots +\alpha ^{(d)}}}{\partial ^{\alpha ^{(1)}}x_{1}\ldots \partial
^{\alpha ^{(d)}}x_{d}}\phi \right\rangle \right]  \\
&=&(-1)^{\alpha ^{(1)}+\ldots +\alpha ^{(d)}}E\left[ 1_{A}\left\langle \frac{%
\partial ^{\alpha ^{(1)}+\ldots +\alpha ^{(d)}}}{\partial ^{\alpha
^{(1)}}x_{1}\ldots \partial ^{\alpha ^{(d)}}x_{d}}X_{t}^{n_{j},\cdot },\phi
\right\rangle \right]  \\
&&\underset{j\longrightarrow \infty }{\longrightarrow }(-1)^{\alpha
^{(1)}+\ldots +\alpha ^{(d)}}E\left[ 1_{A}\left\langle \frac{\partial ^{\alpha
^{(1)}+\ldots +\alpha ^{(d)}}}{\partial ^{\alpha ^{(1)}}x_{1}\ldots \partial
^{\alpha ^{(d)}}x_{d}}Y,\phi \right\rangle \right] .
\end{eqnarray*}%
We also know from (\ref{nj}) that%
\begin{equation*}
E\left[ 1_{A}\left\langle X_{t}^{n_{j},\cdot },\frac{\partial ^{\alpha
^{(1)}+\ldots +\alpha ^{(d)}}}{\partial ^{\alpha ^{(1)}}x_{1}\ldots \partial
^{\alpha ^{(d)}}x_{d}}\phi \right\rangle \right] \underset{j\longrightarrow
\infty }{\longrightarrow }E\left[ 1_{A}\left\langle X_{t}^{\cdot },\frac{%
\partial ^{\alpha ^{(1)}+\ldots +\alpha ^{(d)}}}{\partial ^{\alpha
^{(1)}}x_{1}\ldots \partial ^{\alpha ^{(d)}}x_{d}}\phi \right\rangle \right] .
\end{equation*}%

So $X_{t}^{\cdot }\in L^{2}(\Omega ;W^{k,p}(\mathcal{U}))$ for all $p\geq 2$.
\end{proof}

\section{Appendix}

In view of the need for a version of Girsanov's theorem for fractional Brownian
motion, which we use in connection with the proof of Lemma \ref%
{HigherOrderDerivative}, we recall some basic concepts from fractional
calculus (see \cite{samko.et.al.93} and \cite{lizorkin.01}).

Let $a,$ $b\in \mathbb{R}$ with $a<b$. Let $f\in L^{p}([a,b])$ with $p\geq 1$
and $\alpha >0$. Define the \emph{left-} and \emph{right-sided
Riemann-Liouville fractional integrals} by 

\begin{equation*}
I_{a^{+}}^{\alpha }f(x)=\frac{1}{\Gamma (\alpha )}\int_{a}^{x}(x-y)^{\alpha
-1}f(y)dy
\end{equation*}%

and 

\begin{equation*}
I_{b^{-}}^{\alpha }f(x)=\frac{1}{\Gamma (\alpha )}\int_{x}^{b}(y-x)^{\alpha
-1}f(y)dy
\end{equation*}%

for almost all $x\in \lbrack a,b]$, where $\Gamma $ denotes the Gamma
function.

For a given integer $p\geq 1$, let $I_{a^{+}}^{\alpha }(L^{p})$ (resp. $%
I_{b^{-}}^{\alpha }(L^{p})$) be the image of $L^{p}([a,b])$ of the operator $%
I_{a^{+}}^{\alpha }$ (resp. $I_{b^{-}}^{\alpha }$). If $f\in
I_{a^{+}}^{\alpha }(L^{p})$ (resp. $f\in I_{b^{-}}^{\alpha }(L^{p})$) and $%
0<\alpha <1$ then we can introduce the \emph{left-} and \emph{right-sided
Riemann-Liouville fractional derivatives} by 

\begin{equation*}
D_{a^{+}}^{\alpha }f(x)=\frac{1}{\Gamma (1-\alpha )}\frac{d}{dx}\int_{a}^{x}%
\frac{f(y)}{(x-y)^{\alpha }}dy
\end{equation*}%

and 

\begin{equation*}
D_{b^{-}}^{\alpha }f(x)=\frac{1}{\Gamma (1-\alpha )}\frac{d}{dx}\int_{x}^{b}%
\frac{f(y)}{(y-x)^{\alpha }}dy.
\end{equation*}

The left- and right-sided derivatives of $f$ also have the following
representations 

\begin{equation*}
D_{a^{+}}^{\alpha }f(x)=\frac{1}{\Gamma (1-\alpha )}\left( \frac{f(x)}{%
(x-a)^{\alpha }}+\alpha \int_{a}^{x}\frac{f(x)-f(y)}{(x-y)^{\alpha +1}}%
dy\right)
\end{equation*}%

and
 
\begin{equation*}
D_{b^{-}}^{\alpha }f(x)=\frac{1}{\Gamma (1-\alpha )}\left( \frac{f(x)}{%
(b-x)^{\alpha }}+\alpha \int_{x}^{b}\frac{f(x)-f(y)}{(y-x)^{\alpha +1}}%
dy\right) .
\end{equation*}

Using the above definitions, one finds that 
\begin{equation*}
I_{a^{+}}^{\alpha }(D_{a^{+}}^{\alpha }f)=f
\end{equation*}%
for all $f\in I_{a^{+}}^{\alpha }(L^{p})$ and 
\begin{equation*}
D_{a^{+}}^{\alpha }(I_{a^{+}}^{\alpha }f)=f
\end{equation*}%
for all $f\in L^{p}([a,b])$ and similarly for $I_{b^{-}}^{\alpha }$ and $%
D_{b^{-}}^{\alpha }$.

\bigskip

Consider now a $d$-dimensional \emph{fractional Brownian motion} $%
B^{H}=\{B_{t}^{H},t\in \lbrack 0,T]\}$ with Hurst parameter $H\in (0,1/2)$,
that is $B^{H}$ is a centered Gaussian process with a covariance function
given by 

\begin{equation*}
(R_{H}(t,s))_{i,j}:=E[B_{t}^{H,(i)}B_{s}^{H,(j)}]=\delta _{ij}\frac{1}{2}%
\left( t^{2H}+s^{2H}-|t-s|^{2H}\right) ,\quad i,j=1,\dots ,d,
\end{equation*}%

where $\delta _{ij}$ is one, if $i=j$, or zero else.

Next, we want to briefly pass in review a construction of the fractional
Brownian motion, which can be found in \cite{Nualart}. For convenience let $%
d=1$.

Denote by $\mathcal{E}$ the set of step functions on $[0,T]$ and by $%
\mathcal{H}$ the Hilbert space given by the completion of $\mathcal{E}$
with respect to the inner product 

\begin{equation*}
\langle 1_{[0,t]},1_{[0,s]}\rangle _{\mathcal{H}}=R_{H}(t,s).
\end{equation*}%

From that we obtain an extension of the mapping $1_{[0,t]}\mapsto B_{t}$ to
an isometry between $\mathcal{H}$ and a Gaussian subspace of $L^{2}(\Omega )$
associated with $B^{H}$. Denote by $\varphi \mapsto B^{H}(\varphi )$ this
isometry.

It turns out that for $H<1/2$ the covariance function $R_{H}(t,s)$ can be
represented as

\bigskip\ 

\begin{equation}
R_{H}(t,s)=\int_{0}^{t\wedge s}K_{H}(t,u)K_{H}(s,u)du,  \label{2.2}
\end{equation}%

where 

\begin{equation*}
K_{H}(t,s)=c_{H}\left[ \left( \frac{t}{s}\right) ^{H-\frac{1}{2}}(t-s)^{H-%
\frac{1}{2}}+\left( \frac{1}{2}-H\right) s^{\frac{1}{2}-H}\int_{s}^{t}u^{H-%
\frac{3}{2}}(u-s)^{H-\frac{1}{2}}du\right] .
\end{equation*}%

Here $c_{H}=\sqrt{\frac{2H}{(1-2H)\beta (1-2H,H+1/2)}}$ and $\beta $ is the
Beta function. See \cite[Proposition 5.1.3]{Nualart}.

Using the kernel $K_{H}$, one can define by means (\ref{2.2}) an isometry $%
K_{H}^{\ast }$ between $\mathcal{E}$ and $L^{2}([0,T])$ such that $%
(K_{H}^{\ast }1_{[0,t]})(s)=K_{H}(t,s)1_{[0,t]}(s).$ This isometry extends
to the Hilbert space $\mathcal{H\ }$and has the following representations in
terms of fractional derivatives

\begin{equation*}
(K_{H}^{\ast }\varphi )(s)=c_{H}\Gamma \left( H+\frac{1}{2}\right) s^{\frac{1%
}{2}-H}\left( D_{T^{-}}^{\frac{1}{2}-H}u^{H-\frac{1}{2}}\varphi (u)\right)
(s)
\end{equation*}%

and 

\begin{align*}
(K_{H}^{\ast }\varphi )(s)=& \,c_{H}\Gamma \left( H+\frac{1}{2}\right)
\left( D_{T^{-}}^{\frac{1}{2}-H}\varphi (s)\right) (s) \\
& +c_{H}\left( \frac{1}{2}-H\right) \int_{s}^{T}\varphi (t)(t-s)^{H-\frac{3}{%
2}}\left( 1-\left( \frac{t}{s}\right) ^{H-\frac{1}{2}}\right) dt.
\end{align*}%

for $\varphi \in \mathcal{H}$ One also has that $\mathcal{H}=I_{T^{-}}^{%
\frac{1}{2}-H}(L^{2})$. See \cite{DU} and \cite[Proposition 6]%
{alos.mazet.nualart.01}.

Using the fact that $K_{H}^{\ast }$ is an isometry from $\mathcal{H}$ into $%
L^{2}([0,T])$ the $d$-dimensional process $W=\{W_{t},t\in \lbrack 0,T]\}$
defined by 

\begin{equation}
W_{t}:=B^{H}((K_{H}^{\ast })^{-1}(1_{[0,t]}))  \label{WBH}
\end{equation}%

is a Wiener process and the process $B^{H}$ has the representation 

\begin{equation}
B_{t}^{H}=\int_{0}^{t}K_{H}(t,s)dW_{s}.  \label{BHW}
\end{equation}%

See \cite{alos.mazet.nualart.01}.

In the sequel we also need the concept of fractional Brownian motion with
respect to a filtration.

\begin{definition}
Let $\mathcal{G}=\left\{ \mathcal{G}_{t}\right\} _{t\in \left[ 0,T\right] }$
be a filtration on $\left( \Omega ,\mathcal{F},P\right) $ satisfy the usual
conditions. A fractional Brownian motion $B^{H}$ is called a $\mathcal{G}$%
-fractional Brownian motion if the process $W$ defined by (\ref{WBH}) is a $%
\mathcal{G}$-Brownian motion.
\end{definition}

\bigskip

In what follows, let $W$ be a standard Wiener process on a probability space 
$(\Omega ,\mathfrak{A},P)$ endowed with the natural filtration $\mathcal{F}%
=\{\mathcal{F}_{t}\}_{t\in \lbrack 0,T]}$ which is generated by $W$ and
augmented by all $P$-null sets.\ We denote by $B:=B^{H}$ the fractional
Brownian motion with Hurst parameter $H\in (0,1/2)$ given by the
representation (\ref{BHW}).

We want to employ a version of Girsanov's theorem for fractional Brownian
motion which goes back to \cite[Theorem 4.9]{DU}. The version we recall here
is that given in \cite[Theorem 2]{NO}. In doing so, we have to introduce the
definition of an isomorphism $K_{H}$ from $L^{2}([0,T])$ onto $I_{0+}^{H+%
\frac{1}{2}}(L^{2})$ associated with the kernel $K_{H}(t,s)$ in terms of the
fractional integrals as follows (see \cite[Theorem 2.1]{DU}): 
\begin{equation*}
(K_{H}\varphi )(s)=I_{0^{+}}^{2H}s^{\frac{1}{2}-H}I_{0^{+}}^{\frac{1}{2}%
-H}s^{H-\frac{1}{2}}\varphi ,\quad \varphi \in L^{2}([0,T]).
\end{equation*}

From that and the properties of the Riemann-Liouville fractional integrals
and derivatives one can see that the inverse of $K_{H}$ has the
representation 

\begin{equation*}
(K_{H}^{-1}\varphi )(s)=s^{\frac{1}{2}-H}D_{0^{+}}^{\frac{1}{2}-H}s^{H-\frac{%
1}{2}}D_{0^{+}}^{2H}\varphi (s),\quad \varphi \in I_{0+}^{H+\frac{1}{2}%
}(L^{2}).
\end{equation*}

The latter shows that if $\varphi $ is absolutely continuous, see \cite{NO},
one gets that 
\begin{equation*}
(K_{H}^{-1}\varphi )(s)=s^{H-\frac{1}{2}}I_{0^{+}}^{\frac{1}{2}-H}s^{\frac{1%
}{2}-H}\varphi ^{\prime }(s).
\end{equation*}

\begin{theorem}[Girsanov's theorem for fBm]
\label{girsanov} Let $u=\{u_t, t\in [0,T]\}$ be an $\mathcal{F}$-adapted
process with integrable trajectories and set $\widetilde{B}_t^H = B_t^H +
\int_0^t u_s ds, \quad t\in [0,T].$ Assume that

\begin{itemize}
\item[(i)] $\int_{0}^{\cdot }u_{s}ds\in I_{0+}^{H+\frac{1}{2}}(L^{2}([0,T]))$%
, $P$-a.s.

\item[(ii)] $E[\xi_T]=1$ where 
\begin{equation*}
\xi_T := \exp\left\{-\int_0^T K_H^{-1}\left( \int_0^{\cdot} u_r
dr\right)(s)dW_s - \frac{1}{2} \int_0^T K_H^{-1} \left( \int_0^{\cdot} u_r
dr \right)^2(s)ds \right\}.
\end{equation*}
\end{itemize}

Then the shifted process $\widetilde{B}^H$ is an $\mathcal{F}$-fractional
Brownian motion with Hurst parameter $H$ under the new probability $%
\widetilde{P}$ defined by $\frac{d\widetilde{P}}{dP}=\xi_T$.
\end{theorem}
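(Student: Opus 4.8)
The plan is to reduce Theorem~\ref{girsanov} to the classical Cameron--Martin--Girsanov theorem for the underlying Wiener process $W$, via the Volterra representation $B_t^H=\int_0^tK_H(t,s)\,dW_s$ from (\ref{BHW}) together with the fact, recalled above, that $K_H$ is an isomorphism of $L^2([0,T])$ onto $I_{0^{+}}^{H+\frac{1}{2}}(L^2)$ whose action is given by the kernel $K_H(t,s)$, i.e.\ $(K_H\varphi)(t)=\int_0^tK_H(t,s)\varphi(s)\,ds$.

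First I would use hypothesis~(i). Since $t\mapsto\int_0^tu_s\,ds$ lies in $I_{0^{+}}^{H+\frac{1}{2}}(L^2([0,T]))$ $P$-a.s., the pathwise formula
\[
v:=K_H^{-1}\Bigl(\int_0^{\cdot}u_r\,dr\Bigr)
\]
defines an element of $L^2([0,T])$, and the relation $K_HK_H^{-1}=\mathrm{id}$ on $I_{0^{+}}^{H+\frac{1}{2}}(L^2)$ --- the deterministic content of the fractional-calculus identities $I_{a^{+}}^{\alpha}D_{a^{+}}^{\alpha}=\mathrm{id}$, $D_{a^{+}}^{\alpha}I_{a^{+}}^{\alpha}=\mathrm{id}$ recalled in the Appendix --- gives $\int_0^tu_s\,ds=(K_Hv)(t)=\int_0^tK_H(t,s)v(s)\,ds$ for all $t\in[0,T]$. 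Moreover $v$ is $\mathcal F$-adapted, since $K_H^{-1}$ involves only the forward operators $I_{0^{+}}^{\frac{1}{2}-H}$, $D_{0^{+}}^{\frac{1}{2}-H}$ and $D_{0^{+}}^{2H}$. Substituting into the definition of $\widetilde B^H$ and using linearity, with $\widetilde W_t:=W_t+\int_0^tv(s)\,ds$, I obtain
\[
\widetilde B_t^H=\int_0^tK_H(t,s)\,dW_s+\int_0^tK_H(t,s)v(s)\,ds=\int_0^tK_H(t,s)\,d\widetilde W_s,\qquad t\in[0,T].
\]

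Next I would note that the density $\xi_T$ of hypothesis~(ii) is exactly the exponential
\[
\xi_T=\exp\Bigl\{-\int_0^Tv(s)\,dW_s-\tfrac{1}{2}\int_0^Tv(s)^2\,ds\Bigr\},
\]
i.e.\ the Cameron--Martin--Girsanov density attached to the $L^2$-shift $v$. As a nonnegative local martingale $\xi$ is a supermartingale, and $E[\xi_T]=1$ forces it to be a true martingale, so the classical Girsanov theorem applies: under $\widetilde P$ with $d\widetilde P/dP=\xi_T$, the process $\widetilde W$ is a standard $\mathcal F$-Brownian motion. Plugging this into the displayed identity and invoking the construction (\ref{WBH})--(\ref{BHW}) --- which states precisely that the Wiener integral of $K_H(t,\cdot)$ against a Brownian motion is an fBm of Hurst parameter $H$, and that the Brownian motion one recovers from it via (\ref{WBH}) is the integrator --- I conclude that under $\widetilde P$ the process $\widetilde B^H$ is a fractional Brownian motion with Hurst parameter $H$ and that the associated Brownian motion (in the sense of (\ref{WBH})) is the $\mathcal F$-adapted process $\widetilde W$; hence $\widetilde B^H$ is an $\mathcal F$-fBm under $\widetilde P$.

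The one genuinely delicate point is the first step: that hypothesis~(i) really permits the drift $\int_0^{\cdot}u_s\,ds$ to be written as $K_H$ of an $L^2$ function, so that the perturbation of $B^H$ corresponds to an honest Cameron--Martin perturbation of $W$. This rests on the mapping properties of the singular Volterra operator $K_H$ (image space $I_{0^{+}}^{H+\frac{1}{2}}(L^2)$) and on the inversion formulas for $K_H$ recalled above; the rest is bookkeeping plus an appeal to the classical Girsanov theorem, and a fully detailed proof would simply cite \cite[Theorem~4.9]{DU} and \cite[Theorem~2]{NO}.
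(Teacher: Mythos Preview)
Your argument is correct and is exactly the standard route: reduce to the classical Girsanov theorem for the underlying Wiener process $W$ via the Volterra representation $B_t^H=\int_0^tK_H(t,s)\,dW_s$, using hypothesis~(i) to write the drift as $K_H$ applied to an $L^2$ function so that the shift of $B^H$ becomes an honest Cameron--Martin shift of $W$.

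There is, however, nothing to compare against: the paper does not supply its own proof of Theorem~\ref{girsanov}. The theorem is stated in the Appendix as a known result, attributed to \cite[Theorem~4.9]{DU} in the form given by \cite[Theorem~2]{NO}, and is used as a black box in the proof of Lemma~\ref{HigherOrderDerivative}. Your sketch is essentially the argument one finds in those references, so you have reconstructed the intended proof rather than produced an alternative one. The only remark I would add is that your final sentence already says this: a fully detailed write-up would simply be a citation.
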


\begin{remark}
As for the multi-dimensional case, define 
\begin{equation*}
(K_{H}\varphi )(s):=((K_{H}\varphi ^{(1)})(s),\dots ,(K_{H}\varphi
^{(d)})(s))^{\ast },\quad \varphi \in L^{2}([0,T];\mathbb{R}^{d}),
\end{equation*}%
where $\ast $ denotes transposition. Similarly for $K_{H}^{-1}$ and $%
K_{H}^{\ast }$.
\end{remark}

In this paper we also make use of the following technical lemma, whose proof
can be found in \cite[Lemma 5.3]{BLPP}

\begin{lemma}
\label{expmom} Let $x\in \mathbb{R}^{d}$. If $H<\frac{1}{2(1+d)}$ then%
\begin{equation*}
\sup_{\varepsilon >0}E\left[ \exp
(k\int_{0}^{T}(K_{H}^{-1}(\int_{0}^{.}\varphi _{x,\varepsilon
}(B_{u}^{H})du)(t))^{2}dt)\right] <\infty 
\end{equation*}%
for all $k\in \mathbb{R},$ where%
\begin{equation}
\varphi _{x,\varepsilon }(B_{u}^{H})=\frac{1}{(2\pi \varepsilon )^{\frac{d}{2%
}}}\exp (-\frac{\left\vert B_{u}^{H}-x\right\vert _{\mathbb{R}^{d}}^{2}}{%
2\varepsilon }).  \label{DiracApproximation}
\end{equation}
\end{lemma}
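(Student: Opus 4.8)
The plan is to reduce the exponential moment to a power-series bound on ordinary moments and then to estimate each moment by combining the explicit Riemann--Liouville form of $K_{H}^{-1}$ with the Gaussian density bound for fractional Brownian motion. Since $t\mapsto\int_{0}^{t}\varphi_{x,\varepsilon}(B_{u}^{H})\,du$ is absolutely continuous with derivative $\varphi_{x,\varepsilon}(B_{s}^{H})$, the representation of $K_{H}^{-1}$ on absolutely continuous functions recalled above gives
\[
g_{\varepsilon}(s):=\Bigl(K_{H}^{-1}\Bigl(\int_{0}^{\cdot}\varphi_{x,\varepsilon}(B_{u}^{H})\,du\Bigr)\Bigr)(s)=\frac{s^{H-\frac{1}{2}}}{\Gamma(\tfrac{1}{2}-H)}\int_{0}^{s}(s-u)^{-\frac{1}{2}-H}u^{\frac{1}{2}-H}\varphi_{x,\varepsilon}(B_{u}^{H})\,du .
\]
Writing $Z_{\varepsilon}:=\int_{0}^{T}g_{\varepsilon}(s)^{2}\,ds$, interchanging the order of integration (everything being nonnegative) and using $\varphi_{x,\varepsilon}\ge 0$, one obtains $Z_{\varepsilon}\le C\int_{0}^{T}\int_{0}^{T}\varphi_{x,\varepsilon}(B_{u}^{H})\varphi_{x,\varepsilon}(B_{v}^{H})\Psi(u,v)\,du\,dv$, where $\Psi(u,v)=u^{\frac{1}{2}-H}v^{\frac{1}{2}-H}\int_{u\vee v}^{T}s^{2H-1}(s-u)^{-\frac{1}{2}-H}(s-v)^{-\frac{1}{2}-H}\,ds$; a short scaling/Beta-function computation shows $\Psi(u,v)\le C|u-v|^{-2H}$ on $[0,T]^{2}$. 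Since the statement is trivial for $k\le 0$, it suffices to prove $\sup_{\varepsilon>0}E[Z_{\varepsilon}^{n}]\le C^{n}(n!)^{\theta}$ for all $n\ge 1$, with $C=C(H,T,d)$ and some $\theta<1$, because then $E[\exp(kZ_{\varepsilon})]=\sum_{n\ge 0}\frac{k^{n}}{n!}E[Z_{\varepsilon}^{n}]<\infty$ uniformly in $\varepsilon$.

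Raising the above estimate to the $n$-th power and taking expectations reduces the moment to
\[
E[Z_{\varepsilon}^{n}]\le C^{n}\int_{[0,T]^{2n}}\Bigl(\prod_{i=1}^{n}|u_{i}-v_{i}|^{-2H}\Bigr)\,\Bigl|E\Bigl[\prod_{i=1}^{n}\varphi_{x,\varepsilon}(B_{u_{i}}^{H})\varphi_{x,\varepsilon}(B_{v_{i}}^{H})\Bigr]\Bigr|\,\prod_{i=1}^{n}du_{i}\,dv_{i} .
\]
The core step is to bound the expectation of this product of $2n$ Gaussian kernels. Writing $\varphi_{x,\varepsilon}(y)=(2\pi)^{-d}\int_{\mathbb{R}^{d}}e^{\mathrm{i}\langle\xi,y-x\rangle}e^{-\varepsilon|\xi|^{2}/2}\,d\xi$ and taking expectations turns the product of the $2n$ evaluations at times $\{w_{1},\dots,w_{2n}\}=\{u_{i},v_{i}\}$ into a Gaussian integral over frequencies $\xi_{1},\dots,\xi_{2n}\in\mathbb{R}^{d}$, whose quadratic form is block-diagonal over the $d$ independent coordinates of $B^{H}$, each block being the covariance matrix $\Sigma$ of $(B_{w_{1}}^{H},\dots,B_{w_{2n}}^{H})$. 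Performing the Gaussian integration and using $|\widehat{\varphi_{x,\varepsilon}}|\le 1$ bounds this by $C^{n}(\det\Sigma)^{-d/2}$, and the strong local nondeterminism of fractional Brownian motion gives $\det\Sigma\ge c^{2n}\prod_{j=1}^{2n}|w_{(j)}-w_{(j-1)}|^{2H}$ after reordering $w_{(1)}<\dots<w_{(2n)}$ (with $w_{(0)}:=0$), so
\[
\Bigl|E\Bigl[\prod_{i=1}^{n}\varphi_{x,\varepsilon}(B_{u_{i}}^{H})\varphi_{x,\varepsilon}(B_{v_{i}}^{H})\Bigr]\Bigr|\le C^{n}\prod_{j=1}^{2n}|w_{(j)}-w_{(j-1)}|^{-Hd},
\]
uniformly in $\varepsilon$ (the factors $e^{-\varepsilon|\xi|^{2}/2}$ only help).

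Inserting this bound and decomposing $[0,T]^{2n}$ into its $(2n)!$ ordered simplices, on each simplex one estimates each $|u_{i}-v_{i}|^{-2H}$ by the power of a single consecutive increment (distinct increments for distinct $i$) and evaluates the resulting Dirichlet integral: of the $2n$ increments, $n$ then carry the exponent $-Hd-2H$ and $n$ carry $-Hd$, so each such integral equals a product of Gamma functions divided by $\Gamma\bigl(2n(1-H(d+1))+1\bigr)$. All the increment exponents $-Hd$, $-2H$, $-Hd-2H$ exceed $-1$ exactly because $H<\frac{1}{2(1+d)}$. Multiplying by the $(2n)!$ orderings and invoking Stirling's formula,
\[
\sup_{\varepsilon>0}E[Z_{\varepsilon}^{n}]\le C^{n}\,\frac{(2n)!}{\Gamma\bigl(2n(1-H(d+1))+1\bigr)}\le C^{n}(n!)^{2H(d+1)},
\]
and $\theta:=2H(d+1)<1$ is precisely the hypothesis $H<\frac{1}{2(1+d)}$. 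Hence $\sum_{n}\frac{k^{n}}{n!}E[Z_{\varepsilon}^{n}]\le\sum_{n}(kC)^{n}(n!)^{\theta-1}<\infty$ uniformly in $\varepsilon$, which is the assertion.

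The step I expect to be the main obstacle is the combinatorial bookkeeping in the last paragraph: distributing the $n$ singular factors $|u_{i}-v_{i}|^{-2H}$ among the $2n$ Dirichlet increments, consistently across all orderings, so that the Gamma-function denominator comes out with the sharp argument $2n(1-H(d+1))$, together with the auxiliary estimate $\Psi(u,v)\le C|u-v|^{-2H}$ up to the endpoints $0$ and $T$. The reduction to moments, the Fourier/Gaussian computation, the local-nondeterminism bound, and the final Stirling estimate are routine; the threshold $\frac{1}{2(1+d)}$ is produced exactly by requiring $2H(d+1)<1$, i.e.\ that the moments grow more slowly than $n!$.
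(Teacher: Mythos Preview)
The paper does not prove this lemma; it simply cites \cite[Lemma 5.3]{BLPP}. So there is no ``paper's own proof'' to compare against, and your proposal should be judged on its own merits.

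Your argument is correct in outline and in its key estimates. A few remarks on the two points you flag as potential obstacles:

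\textbf{The kernel bound $\Psi(u,v)\le C|u-v|^{-2H}$.} For $u<v$ one has $s^{2H-1}\le v^{2H-1}$ on $[v,T]$ and $u^{1/2-H}v^{1/2-H}v^{2H-1}\le 1$, so it suffices to bound $\int_{v}^{T}(s-u)^{-1/2-H}(s-v)^{-1/2-H}\,ds$. Substituting $s=v+t$ and splitting at $t=v-u$ gives $C|u-v|^{-2H}$ directly; the endpoints $0$ and $T$ cause no difficulty because the prefactor $u^{1/2-H}$ absorbs the singularity at $0$ and the integration range shrinks at $T$.

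\textbf{The combinatorial assignment of increments.} This is the step you are right to worry about, but it is justified by Hall's marriage theorem. On a fixed ordering $w_{(1)}<\dots<w_{(2n)}$, each pair $(u_i,v_i)$ occupies positions $p_i<q_i$ and spans the consecutive increments with indices in $\{p_i+1,\dots,q_i\}$; you need a system of distinct representatives for these $n$ index sets among the $2n-1$ internal increments. For any $k$ pairs, the union of their index sets is a disjoint union of integer intervals, and in each interval of length $\ell$ the endpoints of the pairs landing there are among $\ell+1$ vertices, so at most $\lfloor(\ell+1)/2\rfloor\le\ell$ pairs are involved. Hence Hall's condition holds and the distinct assignment exists. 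With this, exactly $n$ increments receive exponent $-Hd-2H$ and $n$ (including the first one $w_{(1)}-0$) receive $-Hd$; all exponents exceed $-1$ because $H<\frac{1}{2(d+1)}<\frac{1}{d+2}$, and the Dirichlet integral gives the denominator $\Gamma(2n(1-H(d+1))+1)$ exactly as you state.

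The remaining steps---the Fourier representation of $\varphi_{x,\varepsilon}$, the Gaussian integration yielding $(\det\Sigma)^{-d/2}$, the strong local nondeterminism lower bound $\det\Sigma\ge c^{2n}\prod_{j}|w_{(j)}-w_{(j-1)}|^{2H}$, and the Stirling asymptotics producing $(n!)^{2H(d+1)}$---are standard, and the threshold $H<\frac{1}{2(d+1)}$ emerges precisely as the condition $2H(d+1)<1$ needed for summability of the exponential series. Your proof is sound.
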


\bigskip

In the following we also need an integration by parts formula for iterated
integrals based on \emph{shuffle permutations}. For this purpose, let $m$
and $n$ be integers. We define $S(m,n)$ as the set of shuffle permutations,
i.e. the set of permutations $\sigma :\{1,\dots ,m+n\}\rightarrow \{1,\dots
,m+n\}$ such that $\sigma (1)<\dots <\sigma (m)$ and $\sigma (m+1)<\dots
<\sigma (m+n)$.

Define the $m$-dimensional simplex for $0\leq \theta <t\leq T$, 
\begin{equation*}
\Delta _{\theta ,t}^{m}:=\{(s_{m},\dots ,s_{1})\in \lbrack 0,T]^{m}:\,\theta
<s_{m}<\cdots <s_{1}<t\}.
\end{equation*}%
The product of two simplices can be written as the following union 
\begin{equation*}
\Delta _{\theta ,t}^{m}\times \Delta _{\theta ,t}^{n}=%
\mbox{\footnotesize
$\bigcup_{\sigma \in S(m,n)} \{(w_{m+n},\dots,w_1)\in [0,T]^{m+n} : \,
\theta< w_{\sigma(m+n)} <\cdots < w_{\sigma(1)} <t\} \cup \mathcal{N}$
\normalsize},
\end{equation*}%
where the set $\mathcal{N}$ has null Lebesgue measure. Hence, if $%
f_{i}:[0,T]\rightarrow \mathbb{R}$, $i=1,\dots ,m+n$ are integrable
functions we have 
\begin{align}
\int_{\Delta _{\theta ,t}^{m}}\prod_{j=1}^{m}f_{j}(s_{j})ds_{m}\dots ds_{1}&
\int_{\Delta _{\theta ,t}^{n}}\prod_{j=m+1}^{m+n}f_{j}(s_{j})ds_{m+n}\dots
ds_{m+1}  \notag \\
& =\sum_{\sigma \in S(m,n)}\int_{\Delta _{\theta
,t}^{m+n}}\prod_{j=1}^{m+n}f_{\sigma (j)}(w_{j})dw_{m+n}\cdots dw_{1}.
\label{shuffleIntegral}
\end{align}

The latter relation can be generalized as follows (see \cite{BNP}):

\begin{lemma}
\label{partialshuffle} Let $n,$ $p$ and $k$ be non-negative integers, $k\leq
n$. Assume we have integrable functions $f_{j}:[0,T]\rightarrow \mathbb{R}$, 
$j=1,\dots ,n$ and $g_{i}:[0,T]\rightarrow \mathbb{R}$, $i=1,\dots ,p$. We
may then write 
\begin{align*}
& \int_{\Delta _{\theta ,t}^{n}}f_{1}(s_{1})\dots f_{k}(s_{k})\int_{\Delta
_{\theta ,s_{k}}^{p}}g_{1}(r_{1})\dots g_{p}(r_{p})dr_{p}\dots
dr_{1}f_{k+1}(s_{k+1})\dots f_{n}(s_{n})ds_{n}\dots ds_{1} \\
& =\sum_{\sigma \in A_{n,p}}\int_{\Delta _{\theta ,t}^{n+p}}h_{1}^{\sigma
}(w_{1})\dots h_{n+p}^{\sigma }(w_{n+p})dw_{n+p}\dots dw_{1},
\end{align*}%
where $h_{l}^{\sigma }\in \{f_{j},g_{i}:1\leq j\leq n,1\leq i\leq p\}$.
Above $A_{n,p}$ denotes a subset of permutations of $\{1,\dots ,n+p\}$ such
that $\#A_{n,p}\leq C^{n+p}$ for an appropriate constant $C\geq 1$. Here we
defined $s_{0}=\theta $.
\end{lemma}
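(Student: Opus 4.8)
The plan is to reduce the identity to the product-of-simplices ``shuffle'' formula (\ref{shuffleIntegral}), which is already available. First I would display the domain of integration of the left-hand side explicitly: the constraints are $\theta<s_n<\cdots<s_1<t$ together with $\theta<r_p<\cdots<r_1<s_k$. In particular, since $s_k<s_{k-1}<\cdots<s_1$, \emph{all} the variables $s_{k+1},\dots,s_n$ and $r_1,\dots,r_p$ lie in the interval $(\theta,s_k)$, each group being internally decreasing but with no order relation imposed between the two groups. By Fubini (relabelling $t_i:=s_{k+i}$), the left-hand side therefore equals
\begin{equation*}
\int_{\Delta_{\theta,t}^{k}}\Big(\prod_{j=1}^{k}f_j(s_j)\Big)\Big(\int_{\Delta_{\theta,s_k}^{n-k}}\prod_{i=1}^{n-k}f_{k+i}(t_i)\,dt_{n-k}\cdots dt_1\Big)\Big(\int_{\Delta_{\theta,s_k}^{p}}\prod_{j=1}^{p}g_j(r_j)\,dr_p\cdots dr_1\Big)\,ds_k\cdots ds_1 ,
\end{equation*}
in which the two inner simplices share the upper endpoint $s_k$.

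The key step is then a single application of (\ref{shuffleIntegral}) to the product of those two inner simplex integrals, with the role of ``$t$'' there played by $s_k$: it equals $\sum_{\sigma\in S(n-k,p)}\int_{\Delta_{\theta,s_k}^{n-k+p}}\prod_{l=1}^{n-k+p}\widetilde h^{\sigma}_l(w_l)\,dw_{n-k+p}\cdots dw_1$, where $(\widetilde h^{\sigma}_1,\dots,\widetilde h^{\sigma}_{n-k+p})$ is the rearrangement of the list $(f_{k+1},\dots,f_n,g_1,\dots,g_p)$ prescribed by the shuffle permutation $\sigma$; in particular each $\widetilde h^{\sigma}_l\in\{f_{k+1},\dots,f_n,g_1,\dots,g_p\}$. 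Substituting this back and concatenating the nested simplices $\Delta_{\theta,t}^{k}$ (in $s_1,\dots,s_k$) and $\Delta_{\theta,s_k}^{n-k+p}$ (in $w_1,\dots,w_{n-k+p}$), the combined region $\{\theta<w_{n-k+p}<\cdots<w_1<s_k<\cdots<s_1<t\}$ is exactly $\Delta_{\theta,t}^{n+p}$ after the relabelling $v_1=s_1,\dots,v_k=s_k,\;v_{k+1}=w_1,\dots,v_{n+p}=w_{n-k+p}$. This produces the claimed representation with $h^{\sigma}_j:=f_j$ for $1\le j\le k$ and $h^{\sigma}_{k+l}:=\widetilde h^{\sigma}_l$ for $1\le l\le n-k+p$, so that $h^{\sigma}_j\in\{f_j,g_i\}$ for every $j$.

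Finally, I would take $A_{n,p}$ to be the image of $S(n-k,p)$ under the above construction, i.e. the set of permutations of $\{1,\dots,n+p\}$ fixing the first $k$ positions (which carry $f_1,\dots,f_k$) and shuffling the block $\{k+1,\dots,n\}$ (carrying $f_{k+1},\dots,f_n$) with the block $\{n+1,\dots,n+p\}$ (carrying $g_1,\dots,g_p$). Then $\#A_{n,p}=\#S(n-k,p)=\binom{n-k+p}{p}\le 2^{n-k+p}\le 2^{n+p}$, so the bound holds with $C=2$. The boundary cases cause no difficulty: when $p=0$ the statement is trivial with $A_{n,p}$ the singleton containing the identity permutation; when $k=n$ the ``tail'' list $f_{k+1},\dots,f_n$ is empty and the shuffle reduces accordingly; and when $k=0$ (with the convention $s_0=\theta$) the inner integral is taken over the Lebesgue-null set $\Delta_{\theta,\theta}^{p}$ for $p\ge 1$, so both sides vanish.

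I expect the only real work to be careful bookkeeping — correctly identifying which coordinates are mutually unconstrained, tracking the relabellings between the $s$-, $r$-, $w$- and $v$-variables, and noting that the null set $\mathcal{N}$ arising in the decomposition of a product of simplices contributes nothing since all integrands lie in $L^{1}$. An essentially equivalent alternative is induction on $p$: at each step one inserts a single variable among the (already shuffled) variables below $s_k$, multiplying the number of terms by at most $n+p$, which again yields a bound of the form $C^{n+p}$; invoking (\ref{shuffleIntegral}) once is cleaner, though.
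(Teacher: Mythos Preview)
Your argument is correct. The paper itself does not give a proof of Lemma~\ref{partialshuffle}; it merely records the statement and refers to \cite{BNP} for the proof. So there is no in-paper argument to compare against directly.

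That said, your approach is exactly the natural one and is consistent with how the paper uses the result: you observe that the variables $s_{k+1},\dots,s_n$ and $r_1,\dots,r_p$ all lie in $(\theta,s_k)$ with no cross-ordering, apply the product-of-simplices shuffle identity (\ref{shuffleIntegral}) once to merge them, and then concatenate with the outer simplex in $s_1,\dots,s_k$ to obtain $\Delta_{\theta,t}^{n+p}$. The cardinality bound $\#A_{n,p}=\binom{n-k+p}{p}\le 2^{n+p}$ with $C=2$ is sharp enough for the lemma as stated. Your handling of the degenerate cases $p=0$, $k=n$, and $k=0$ (where the inner integral over $\Delta_{\theta,\theta}^p$ vanishes for $p\ge 1$) is also fine. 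The inductive alternative you mention at the end is closer in spirit to how the paper proves the companion Lemma~\ref{OrderDerivatives}, but your single-shot use of (\ref{shuffleIntegral}) is cleaner and yields a better constant.
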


\bigskip

The proof of Lemma \ref{HigherOrderDerivative} requires an important
estimate (see e.g. Proposition 3.3 in \cite{BLPP} for a new proof).
To this end, let $m$ be an integer and let $f:[0,T]^{m}\times (\mathbb{R}%
^{d})^{m}\rightarrow \mathbb{R}$ be a function of the form 
\begin{equation}
f(s,z)=\prod_{j=1}^{m}f_{j}(s_{j},z_{j}),\quad s=(s_{1},\dots ,s_{m})\in
\lbrack 0,T]^{m},\quad z=(z_{1},\dots ,z_{m})\in (\mathbb{R}^{d})^{m},
\label{ff}
\end{equation}%
where $f_{j}:[0,T]\times \mathbb{R}^{d}\rightarrow \mathbb{R}$, $j=1,\dots ,m
$ are smooth functions with compact support. Further, let $\varkappa
:[0,T]^{m}\rightarrow \mathbb{R}$ be a function of the form 
\begin{equation}
\varkappa (s)=\prod_{j=1}^{m}\varkappa _{j}(s_{j}),\quad s\in \lbrack
0,T]^{m},  \label{kappa}
\end{equation}%
where $\varkappa _{j}:[0,T]\rightarrow \mathbb{R}$, $j=1,\dots ,m$ are
integrable functions.

Next, denote by $\alpha_j$ a multi-index and $D^{\alpha_j}$ its
corresponding differential operator. For $\alpha = (\alpha_1, \dots,
\alpha_m)$ considered an element of $\mathbb{N}_0^{d\times m}$ so that $%
|\alpha|:= \sum_{j=1}^m \sum_{l=1}^d \alpha_{j}^{(l)}$, we write 
\begin{equation*}
D^{\alpha}f(s,z) = \prod_{j=1}^m D^{\alpha_j} f_j(s_j,z_j).
\end{equation*}

\begin{theorem}
\label{mainestimate2} Let $B^{H},H\in (0,1/2)$ be a standard $d-$dimensional
fractional Brownian motion and functions $f$ and $\varkappa $ as in (\ref{ff}%
), respectively as in (\ref{kappa}). Let $\theta ,t\in \lbrack 0,T]$ with $%
\theta <t$ and%
\begin{equation*}
\varkappa _{j}(s)=(K_{H}(s,\theta ))^{\varepsilon _{j}},\theta <s<t
\end{equation*}%
for every $j=1,\ldots ,m$ with $(\varepsilon _{1},\ldots ,\varepsilon _{m})\in
\{0,1\}^{m}.$ Let $\alpha \in (\mathbb{N}_{0}^{d})^{m}$ be a multi-index. If 
\begin{equation*}
H<\frac{\frac{1}{2}-\gamma }{(d-1+2\sum_{l=1}^{d}\alpha _{j}^{(l)})}
\end{equation*}%
for all $j$, where $\gamma \in (0,H)$ is sufficiently small, then there
exists a universal constant $C$ (depending on $H$, $T$ and $d$, but
independent of $m$, $\{f_{i}\}_{i=1,\ldots ,m}$ and $\alpha $) such that for any 
$\theta ,t\in \lbrack 0,T]$ with $\theta <t$ we have%

\begin{eqnarray*}
&&\left\vert E\int_{\Delta _{\theta ,t}^{m}}\left( \prod_{j=1}^{m}D^{\alpha
_{j}}f_{j}(s_{j},B_{s_{j}}^{H})\varkappa _{j}(s_{j})\right) ds\right\vert  \\
&\leq &C^{m+\left\vert \alpha \right\vert }\prod_{j=1}^{m}\left\Vert
f_{j}(\cdot ,z_{j})\right\Vert _{L^{1}(\mathbb{R}^{d};L^{\infty
}([0,T]))}\theta ^{(H-\frac{1}{2})\sum_{j=1}^{m}\varepsilon _{j}} \\
&&\times \frac{(\prod_{l=1}^{d}(2\left\vert \alpha ^{(l)}\right\vert
)!)^{1/4}(t-\theta )^{-H(md+2\left\vert \alpha \right\vert )-(H-\frac{1}{2}%
-\gamma )\sum_{j=1}^{m}\varepsilon _{j}+m}}{\Gamma (-H(2md+4\left\vert
\alpha \right\vert )+2(H-\frac{1}{2}-\gamma )\sum_{j=1}^{m}\varepsilon
_{j}+2m)^{1/2}}.
\end{eqnarray*}
\end{theorem}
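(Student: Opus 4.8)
The plan is to prove the estimate by Fourier analysis: expand each $D^{\alpha _{j}}f_{j}$ in its Fourier transform, turn the expectation into a Gaussian characteristic‑function integral over the frequency variables, control that integral by the strong local nondeterminism of fractional Brownian motion, and then integrate the resulting deterministic bound over the simplex $\Delta _{\theta ,t}^{m}$ by a Dirichlet/Beta‑function computation.

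First, since the $\varkappa _{j}$ are deterministic, pulling them out gives
\[
\left\vert E\int_{\Delta _{\theta ,t}^{m}}\Big(\prod_{j}D^{\alpha _{j}}f_{j}(s_{j},B_{s_{j}}^{H})\varkappa _{j}(s_{j})\Big)ds\right\vert \leq \int_{\Delta _{\theta ,t}^{m}}\Big\vert E\big[\prod_{j}D^{\alpha _{j}}f_{j}(s_{j},B_{s_{j}}^{H})\big]\Big\vert \prod_{j}\vert \varkappa _{j}(s_{j})\vert \,ds ,
\]
so it suffices to bound the expectation pointwise on the ordered simplex $\theta <s_{m}<\dots <s_{1}<t$. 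Writing $D^{\alpha _{j}}f_{j}(s_{j},z)=(2\pi )^{-d}\int_{\mathbb{R}^{d}}\widehat{f_{j}}(s_{j},\xi _{j})(i\xi _{j})^{\alpha _{j}}e^{i\langle \xi _{j},z\rangle }d\xi _{j}$ with $(i\xi _{j})^{\alpha _{j}}=\prod_{l}(i\xi _{j}^{(l)})^{\alpha _{j}^{(l)}}$, inserting this into the expectation and interchanging (legitimate because the $\widehat{f_{j}}$ are Schwartz and the Gaussian factor is integrable), and using that the $d$ coordinate processes of $B^{H}$ are independent so that $E[e^{i\sum_{j}\langle \xi _{j},B_{s_{j}}^{H}\rangle }]=\exp (-\tfrac{1}{2}\sum_{l=1}^{d}\langle \xi ^{(l)},\Sigma _{s}\xi ^{(l)}\rangle )$ with $\Sigma _{s}=(R_{H}(s_{p},s_{q}))_{p,q=1}^{m}$ and $\xi ^{(l)}=(\xi _{1}^{(l)},\dots ,\xi _{m}^{(l)})$, together with $\vert \widehat{f_{j}}(s_{j},\cdot )\vert \leq \Vert f_{j}(s_{j},\cdot )\Vert _{L^{1}(\mathbb{R}^{d})}\leq \Vert f_{j}(\cdot ,z)\Vert _{L^{1}(\mathbb{R}^{d};L^{\infty }([0,T]))}$, one arrives at
\[
\Big\vert E\big[\prod_{j}D^{\alpha _{j}}f_{j}(s_{j},B_{s_{j}}^{H})\big]\Big\vert \leq (2\pi )^{-dm}\prod_{j}\Vert f_{j}(\cdot ,z)\Vert _{L^{1}(\mathbb{R}^{d};L^{\infty }([0,T]))}\prod_{l=1}^{d}\int_{\mathbb{R}^{m}}\prod_{j=1}^{m}\vert y_{j}\vert ^{\alpha _{j}^{(l)}}e^{-\frac{1}{2}\langle y,\Sigma _{s}y\rangle }dy .
\]

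The heart of the argument is the estimation of the Gaussian integral $\int_{\mathbb{R}^{m}}\prod_{j}\vert y_{j}\vert ^{\alpha _{j}^{(l)}}e^{-\frac{1}{2}\langle y,\Sigma _{s}y\rangle }dy=(2\pi )^{m/2}(\det \Sigma _{s})^{-1/2}E\big[\prod_{j}\vert Z_{j}\vert ^{\alpha _{j}^{(l)}}\big]$ with $Z\sim N(0,\Sigma _{s}^{-1})$. Using a generalized H\"{o}lder inequality, the Gaussian moment bound $E[\vert Z_{j}\vert ^{2k}]\leq (2k)!\,((\Sigma _{s}^{-1})_{jj})^{k}$, and one further Cauchy--Schwarz, the moment is bounded by $(\prod_{l}(2\vert \alpha ^{(l)}\vert )!)^{1/4}\prod_{j}((\Sigma _{s}^{-1})_{jj})^{\vert \alpha _{j}\vert /2}$, where $\vert \alpha ^{(l)}\vert =\sum_{j}\alpha _{j}^{(l)}$; this produces the factorial factor in the claim. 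Here the fractional Brownian motion enters through two strong local nondeterminism facts on the ordered simplex (with the convention $s_{m+1}:=\theta $): $\det \Sigma _{s}\gtrsim c_{H}^{m}\prod_{j}(s_{j}-s_{j+1})^{2H}$, since $\det \Sigma _{s}$ is the product of the conditional variances $\mathrm{Var}(B_{s_{j}}^{H}\mid B_{s_{j+1}}^{H},\dots ,B_{s_{m}}^{H})$ and each is $\gtrsim c_{H}(s_{j}-s_{j+1})^{2H}$; and $(\Sigma _{s}^{-1})_{jj}=1/\mathrm{Var}(B_{s_{j}}^{H}\mid \{B_{s_{p}}^{H}\}_{p\neq j})\lesssim c_{H}^{-1}(s_{j}-s_{j+1})^{-2H}$ up to a nearest‑neighbour correction. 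One also needs the near‑diagonal behaviour $K_{H}(s,\theta )\asymp \theta ^{H-1/2}(s-\theta )^{H-1/2}$, with the lower‑order part absorbed into the slack $\gamma \in (0,H)$, in order to treat the weights $\prod_{j}\vert \varkappa _{j}(s_{j})\vert =\prod_{j}(K_{H}(s_{j},\theta ))^{\varepsilon _{j}}$, which is what produces the factor $\theta ^{(H-1/2)\sum _{j}\varepsilon _{j}}$.

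Putting these together yields a deterministic pointwise bound of the schematic shape $C^{m+\vert \alpha \vert }\prod_{j}\Vert f_{j}(\cdot ,z)\Vert _{L^{1}(\mathbb{R}^{d};L^{\infty }([0,T]))}(\prod_{l}(2\vert \alpha ^{(l)}\vert )!)^{1/4}\prod_{j}(s_{j}-s_{j+1})^{-H(d+2\vert \alpha _{j}\vert )}\prod_{j}(s_{j}-\theta )^{(H-1/2)\varepsilon _{j}}$, and it remains to integrate it over $\Delta _{\theta ,t}^{m}$. Applying Cauchy--Schwarz in the time variables and then the Dirichlet identity $\int_{\Delta _{\theta ,t}^{m}}\prod_{j}(s_{j}-s_{j+1})^{-\beta _{j}}ds=\frac{\prod_{j}\Gamma (1-\beta _{j})}{\Gamma (m+1-\sum _{j}\beta _{j})}(t-\theta )^{m-\sum _{j}\beta _{j}}$ (in a variant that also carries the $(s_{j}-\theta )$‑weights), the positivity and boundedness of the arguments $1-\beta _{j}$ after the Cauchy--Schwarz doubling require precisely a (slightly strengthened) form of $H(d-1+2\vert \alpha _{j}\vert )<\tfrac{1}{2}$, i.e.\ the stated hypothesis $H<\frac{\frac{1}{2}-\gamma }{d-1+2\sum _{l}\alpha _{j}^{(l)}}$; the numerator $\Gamma $‑factors are each $O(1)$ and absorbed into $C^{m+\vert \alpha \vert }$, while the denominator $\Gamma (m+1-\sum _{j}\cdots )^{-1}$ combined with the volume factor $(\mathrm{Leb}\,\Delta _{\theta ,t}^{m})^{1/2}=((t-\theta )^{m}/m!)^{1/2}$ is rewritten, via Stirling/the duplication formula $\Gamma (2a)\gtrsim c^{a}\Gamma (a)^{2}$, as $\leq C^{m}\Gamma (-H(2md+4\vert \alpha \vert )+2(H-\tfrac{1}{2}-\gamma )\sum _{j}\varepsilon _{j}+2m)^{-1/2}$, and the powers of $(t-\theta )$ and $\theta $ come out exactly as claimed. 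The main obstacle is this bundle of sharp fractional‑Brownian‑motion estimates — the two‑sided control of $\det \Sigma _{s}$ and of the diagonal of $\Sigma _{s}^{-1}$ on the ordered simplex via strong local nondeterminism, the matching near‑$\theta $ asymptotics of $K_{H}$, and the delicate bookkeeping needed to keep every constant uniformly of the form $C^{m+\vert \alpha \vert }$ and to land on the exact exponents (the $d-1$ versus $d$, the slack $\gamma $, the $\varepsilon _{j}$‑corrections) — together with the routine but necessary justification of the Fourier interchange.
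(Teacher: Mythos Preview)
The paper does not actually prove this theorem: it is quoted in the Appendix with the parenthetical ``(see e.g.\ Proposition~3.3 in \cite{BLPP} for a new proof)'' and then used as a black box in the proof of Lemma~\ref{HigherOrderDerivative}. So there is no proof in the paper against which to compare your attempt.

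That said, your sketch is essentially the argument given in the cited references \cite{BLPP}, \cite{BNP}: Fourier expansion of the $D^{\alpha_j}f_j$, reduction to a Gaussian characteristic-function integral over the frequency variables, control of the resulting $dz$-integrals through the strong local nondeterminism of $B^H$, and finally a Dirichlet/Beta computation over the simplex. The structure, the role of the hypothesis $H<\frac{1/2-\gamma}{d-1+2|\alpha_j|}$ as the integrability threshold, and the origin of the factorial and $\Gamma$-factors are all correctly identified.

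One technical point deserves a sharper treatment than ``up to a nearest-neighbour correction'': bounding the Gaussian moment via $(\Sigma_s^{-1})_{jj}=1/\mathrm{Var}(B_{s_j}^H\mid\{B_{s_p}^H\}_{p\neq j})$ lands you with the \emph{two-sided} conditional variance, which by LND is only $\gtrsim c_H\min(s_{j-1}-s_j,\,s_j-s_{j+1})^{2H}$, so the resulting product does not immediately factor over consecutive gaps. The standard route in \cite{BLPP}, \cite{BNP} avoids this by performing the triangular change of variables $\eta_j=\sum_{p\leq j}\xi_p$ in Fourier space, after which the Gaussian is controlled by the \emph{one-sided} conditional variances $\mathrm{Var}(B_{s_j}^H\mid B_{s_{j+1}}^H,\ldots,B_{s_m}^H)\gtrsim c_H(s_j-s_{j+1})^{2H}$ and the product over consecutive gaps appears directly. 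With this adjustment your outline matches the argument in the cited literature.
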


\begin{remark}
\label{S}The above theorem also holds true for functions $%
\{f_{i}\}_{i=1,\ldots ,m}$ in the Schwartz function space.
\end{remark}

\bigskip

Finally, we also need the following auxiliary result in connection with the
proof of Lemma \ref{HigherOrderDerivative}:

\begin{lemma}
\label{OrderDerivatives}Let $n,$ $p$ and $k$ be non-negative integers, $%
k\leq n$. Assume we have functions $f_{j}:[0,T]\rightarrow \mathbb{R}$, $%
j=1,\dots ,n$ and $g_{i}:[0,T]\rightarrow \mathbb{R}$, $i=1,\dots ,p$ such
that

\begin{equation*}
f_{j}\in \left\{ \frac{\partial ^{\alpha _{j}^{(1)}+\ldots +\alpha _{j}^{(d)}}}{%
\partial ^{\alpha _{j}^{(1)}}x_{1}\ldots \partial ^{\alpha _{j}^{(d)}}x_{d}}%
b^{(r)}(X_{u}^{x}),\text{ }r=1,\ldots ,d\right\} ,\text{ }j=1,\ldots ,n
\end{equation*}%

and

\begin{equation*}
g_{i}\in \left\{ \frac{\partial ^{\beta _{i}^{(1)}+\ldots +\beta _{i}^{(d)}}}{%
\partial ^{\beta _{i}^{(1)}}x_{1}\ldots \partial ^{\beta _{i}^{(d)}}x_{d}}%
b^{(r)}(X_{u}^{x}),\text{ }r=1,\ldots ,d\right\} ,\text{ }i=1,\ldots ,p
\end{equation*}%

for $\alpha :=(\alpha _{j}^{(l)})\in \mathbb{N}_{0}^{d\times n}$ and $\beta
:=(\beta _{i}^{(l)})\in \mathbb{N}_{0}^{d\times p},$ where $X_{\cdot }^{x}$
is the strong solution to 

\begin{equation*}
X_{t}^{x}=x+\int_{0}^{t}b(X_{u}^{x})du+B_{t}^{H},\text{ }0\leq t\leq T
\end{equation*}%

for $b=(b^{(1)},\ldots ,b^{(d)})$ with $b^{(r)}\in \mathcal{S}(\mathbb{R}^{d})$
for all $r=1,\ldots ,d$. So (as we shall say in the sequel) the product $%
g_{1}(r_{1})\cdot \dots \cdot g_{p}(r_{p})$ has a total order of derivatives 
$\left\vert \beta \right\vert =\sum_{l=1}^{d}\sum_{i=1}^{p}\beta _{i}^{(l)}$%
. We know from Lemma \ref{partialshuffle} that 

\begin{align}
& \int_{\Delta _{\theta ,t}^{n}}f_{1}(s_{1})\dots f_{k}(s_{k})\int_{\Delta
_{\theta ,s_{k}}^{p}}g_{1}(r_{1})\dots g_{p}(r_{p})dr_{p}\dots
dr_{1}f_{k+1}(s_{k+1})\dots f_{n}(s_{n})ds_{n}\dots ds_{1}  \notag \\
& =\sum_{\sigma \in A_{n,p}}\int_{\Delta _{\theta ,t}^{n+p}}h_{1}^{\sigma
}(w_{1})\dots h_{n+p}^{\sigma }(w_{n+p})dw_{n+p}\dots dw_{1},  \label{h}
\end{align}%

where $h_{l}^{\sigma }\in \{f_{j},g_{i}:1\leq j\leq n,$ $1\leq i\leq p\}$, $%
A_{n,p}$ is a subset of permutations of $\{1,\dots ,n+p\}$ such that $%
\#A_{n,p}\leq C^{n+p}$ for an appropriate constant $C\geq 1$, and $%
s_{0}=\theta $. Then the products%

\begin{equation*}
h_{1}^{\sigma }(w_{1})\cdot \dots \cdot h_{n+p}^{\sigma }(w_{n+p})
\end{equation*}%

have a total order of derivatives given by $\left\vert \alpha \right\vert
+\left\vert \beta \right\vert .$
\end{lemma}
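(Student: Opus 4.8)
The statement is combinatorial rather than analytic: it records that the integration-by-parts rearrangement supplied by Lemma~\ref{partialshuffle} neither creates nor destroys derivatives, but merely permutes the integration variables. The plan is to follow the functions, not the variables.

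First I would go back to the construction underlying Lemma~\ref{partialshuffle} (and already to the shuffle identity~(\ref{shuffleIntegral})) and observe that, for each fixed $\sigma\in A_{n,p}$, the ordered list $h_1^{\sigma},\ldots,h_{n+p}^{\sigma}$ appearing in~(\ref{h}) is nothing but a rearrangement of the list $f_1,\ldots,f_n,g_1,\ldots,g_p$; that is, each $f_j$ and each $g_i$ occurs exactly once among the $h_l^{\sigma}$. Indeed, passing from the nested integral on the left of~(\ref{h}) to a sum of integrals over the full simplex $\Delta_{\theta,t}^{n+p}$ amounts to decomposing the domain according to the relative order of all $n+p$ variables $s_1,\ldots,s_n,r_1,\ldots,r_p$ (subject to $s_1>\cdots>s_n$ and $s_k>r_1>\cdots>r_p>\theta$) and relabelling the variables in decreasing order on each piece; the integrand attached to a given variable is never changed, only the name of that variable and the order of integration. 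Hence the multiset of integrand factors is preserved.

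Next I would use that the ``total order of derivatives'' is additive over products: for a product of factors of the form $\frac{\partial^{\gamma^{(1)}+\ldots+\gamma^{(d)}}}{\partial^{\gamma^{(1)}}x_1\ldots\partial^{\gamma^{(d)}}x_d}b^{(r)}(X_u^x)$ it is, by definition, the sum over the factors of the individual orders $\gamma^{(1)}+\ldots+\gamma^{(d)}$, so that $f_j$ contributes $|\alpha_j|=\sum_{l=1}^d\alpha_j^{(l)}$ and $g_i$ contributes $|\beta_i|=\sum_{l=1}^d\beta_i^{(l)}$. Being a function of the multiset of factors only, this quantity is invariant under the rearrangement of the previous paragraph, whence for every $\sigma\in A_{n,p}$ the product $h_1^{\sigma}(w_1)\cdots h_{n+p}^{\sigma}(w_{n+p})$ has total order of derivatives
\[
\sum_{j=1}^n|\alpha_j|+\sum_{i=1}^p|\beta_i|=|\alpha|+|\beta|,
\]
which is the assertion. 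The argument carries no genuine difficulty; the only point deserving care is the first step — checking that the right-hand side of~(\ref{h}) really consists of \emph{rearrangements} of the original factor list, with nothing duplicated or omitted — and this is exactly Lemma~\ref{partialshuffle} read at the level of functions instead of integrals, so I expect at most some unwinding of the inductive definition of $A_{n,p}$ rather than a real obstacle.
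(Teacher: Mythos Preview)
Your argument is correct, and it is more direct than the paper's. The paper proves the lemma by induction on $n$: for $n=1$ the claim is immediate; for the inductive step one peels off the outermost variable $s_1$, applies either the shuffle identity~(\ref{shuffleIntegral}) (when $k=1$) or the induction hypothesis (when $k\geq 2$) to the remaining inner integral, and then adds back the contribution $\sum_l \alpha_1^{(l)}$ of $f_1$. In effect the paper re-traverses the recursive construction of $A_{n,p}$ from Lemma~\ref{partialshuffle} in order to verify, layer by layer, that derivative orders are conserved.

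You instead isolate the single structural fact that does all the work: for every $\sigma\in A_{n,p}$ the tuple $(h_1^\sigma,\ldots,h_{n+p}^\sigma)$ is a permutation of $(f_1,\ldots,f_n,g_1,\ldots,g_p)$, so the total order of derivatives---being additive over factors and insensitive to their ordering---is automatically $|\alpha|+|\beta|$. This is cleaner and makes the lemma essentially tautological once that permutation property is granted. The only cost, which you correctly flag, is that Lemma~\ref{partialshuffle} as stated in the paper asserts merely $h_l^\sigma\in\{f_j,g_i\}$ and not bijectivity; establishing the latter requires exactly the same induction the paper carries out here, just phrased as a statement about multisets of factors rather than about derivative counts. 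So the two proofs have the same skeleton, but yours separates the combinatorial content (permutation of factors) from the bookkeeping (additivity of $|\cdot|$), which is the more transparent presentation.
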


\begin{proof}
The result is proved by induction on $n$. For $n=1$ and $k=0$ the result is
trivial. For $k=1$ we have 

\begin{eqnarray*}
\int_{\theta }^{t}f_{1}(s_{1})\int_{\Delta _{\theta
,s_{1}}^{p}}g_{1}(r_{1})\dots g_{p}(r_{p}) &&dr_{p}\dots dr_{1}ds_{1} \\
&=&\int_{\Delta _{\theta ,t}^{p+1}}f_{1}(w_{1})g_{1}(w_{2})\dots
g_{p}(w_{p+1})dw_{p+1}\dots dw_{1},
\end{eqnarray*}%

where we have put $w_{1}=s_{1},$ $w_{2}=r_{1},\dots ,w_{p+1}=r_{p}$. Hence
the total order of derivatives involved in the product of the last integral
is given by $\sum_{l=1}^{d}\alpha
_{1}^{(l)}+\sum_{l=1}^{d}\sum_{i=1}^{p}\beta _{i}^{(l)}=\left\vert \alpha
\right\vert +\left\vert \beta \right\vert .$

Assume the result holds for $n$ and let us show that this implies that the
result is true for $n+1$. Either $k=0,1$ or $2\leq k\leq n+1$. For $k=0$ the
result is trivial. For $k=1$ we have 

\begin{align*}
\int_{\Delta _{\theta ,t}^{n+1}}& f_{1}(s_{1})\int_{\Delta _{\theta
,s_{1}}^{p}}g_{1}(r_{1})\dots g_{p}(r_{p})dr_{p}\dots
dr_{1}f_{2}(s_{2})\dots f_{n+1}(s_{n+1})ds_{n+1}\dots ds_{1} \\
& =\int_{\theta }^{t}f_{1}(s_{1})\left( \int_{\Delta _{\theta
,s_{1}}^{n}}\int_{\Delta _{\theta ,s_{1}}^{p}}g_{1}(r_{1})\dots
g_{p}(r_{p})dr_{p}\dots dr_{1}f_{2}(s_{2})\dots
f_{n+1}(s_{n+1})ds_{n+1}\dots ds_{2}\right) ds_{1}.
\end{align*}%

From (\ref{shuffleIntegral}) we observe by using the shuffle permutations
that the latter inner double integral on diagonals can be written as a sum
of integrals on diagonals of length $p+n$ with products having a total order
of derivatives given by $\sum_{l=1}\sum_{j=2}^{n+1}\alpha
_{j}^{(l)}+\sum_{l=1}^{d}\sum_{i=1}^{p}\beta _{i}^{(l)}$. Hence we obtain a
sum of products, whose total order of derivatives is $\sum_{l=1}^{d}%
\sum_{j=2}^{n+1}\alpha _{j}^{(l)}+\sum_{l=1}^{d}\sum_{i=1}^{p}\beta
_{i}^{(l)}+\sum_{l=1}^{d}\alpha _{1}^{(l)}=\left\vert \alpha \right\vert
+\left\vert \beta \right\vert .$

For $k\geq 2$ we have (in connection with Lemma \ref{partialshuffle}) from
the induction hypothesis that 

\begin{align*}
\int_{\Delta _{\theta ,t}^{n+1}}f_{1}(s_{1})\dots f_{k}(s_{k})\int_{\Delta
_{\theta ,s_{k}}^{p}}g_{1}(r_{1})\dots g_{p}(r_{p})& dr_{p}\dots
dr_{1}f_{k+1}(s_{k+1})\dots f_{n+1}(s_{n+1})ds_{n+1}\dots ds_{1} \\
=\int_{\theta }^{t}f_{1}(s_{1})\int_{\Delta _{\theta
,s_{1}}^{n}}f_{2}(s_{2})\dots f_{k}(s_{k})& \int_{\Delta _{\theta
,s_{k}}^{p}}g_{1}(r_{1})\dots g_{p}(r_{p})dr_{p}\dots dr_{1} \\
& \times f_{k+1}(s_{k+1})\dots f_{n+1}(s_{n+1})ds_{n+1}\dots ds_{2}ds_{1} \\
=\sum_{\sigma \in A_{n,p}}\int_{\theta }^{t}f_{1}(s_{1})\int_{\Delta
_{\theta ,s_{1}}^{n+p}}& h_{1}^{\sigma }(w_{1})\dots h_{n+p}^{\sigma
}(w_{n+p})dw_{n+p}\dots dw_{1}ds_{1},
\end{align*}%

where each of the products $h_{1}^{\sigma }(w_{1})\cdot \dots \cdot
h_{n+p}^{\sigma }(w_{n+p})$ have a total order of derivatives given by $%
\sum_{l=1}\sum_{j=2}^{n+1}\alpha
_{j}^{(l)}+\sum_{l=1}^{d}\sum_{i=1}^{p}\beta _{i}^{(l)}.$ Thus we get a sum
with respect to a set of permutations $A_{n+1,p}$ with products having a
total order of derivatives which is%

\begin{equation*}
\sum_{l=1}^{d}\sum_{j=2}^{n+1}\alpha
_{j}^{(l)}+\sum_{l=1}^{d}\sum_{i=1}^{p}\beta _{i}^{(l)}+\sum_{l=1}^{d}\alpha
_{1}^{(l)}=\left\vert \alpha \right\vert +\left\vert \beta \right\vert .
\end{equation*}
\end{proof}

\bigskip

\end{document}